\documentclass{amsproc}

 \newtheorem{teor}{Theorem}[section]
 \newtheorem{cor}[teor]{Corollary}
 \newtheorem{lemma}[teor]{Lemma}
 
 \newtheorem{prop}[teor]{Proposition}
 \theoremstyle{definition}
 
 \theoremstyle{remark}
 \newtheorem{remark}[teor]{Remark}

\numberwithin{equation}{section}




\usepackage{subfigure}

\usepackage{color}
\usepackage{graphics}
\usepackage{graphicx}
\usepackage{epsfig}
\usepackage{amssymb}
\usepackage{verbatim}
\usepackage{graphics,amsmath,amsfonts,amssymb} 
\usepackage{epsfig,verbatim,multicol}

\begin{document}

\title[Galoisian \& Qualitative Approaches to Linear PZ Vector Fields ]{Galoisian and Qualitative Approaches to Linear Polyanin-Zaitsev Vector Fields}

\author[P. Acosta-Hum\'anez]{Primitivo B. Acosta-Hum\'anez}
\address[P. Acosta-Hum\'anez]{Universidad del Sim\'on Bolivar, Barranquilla - Colombia}
\email{primitivo.acosta@unisimonbolivar.edu.co}

\author[A. Reyes-Linero]{Alberto Reyes-Linero}
\address[A. Reyes-Linero]{Universidad del Atl\'antico, Barranquilla - Colombia}
\email{areyeslinero@mail.uniatlantico.edu.co}

\author[J. Rodriguez-Contreras]{Jorge Rodr\'i­guez-Contreras}
\address[J. Rodriguez-Contreras]{Universidad del Norte \& Universidad del Atl\'antico, Barranquilla - Colombia}
\email{jrodri@uninorte.edu.co}

\maketitle

\begin{abstract}

The analysis of dynamical systems has been a topic of great interest for researches mathematical sciences for a long times. The implementation of several devices and tools have been useful in the finding of solutions as well to describe common behaviors of parametric families of these systems. In this paper we study deeply a particular parametric family of differential equations, the so-called \emph{Linear Polyanin-Zaitsev Vector Field}, which has been introduced in a general case in \cite{ARR} as a correction of a family presented in \cite{PY}. Linear Polyanin-Zaitsev Vector Field is transformed into a Li\'enard equation and in particular we obtain the Van Der Pol equation. We present some algebraic and qualitative results to illustrate some interactions between algebra and the qualitative theory of differential equations in this parametric family.

\noindent\footnotesize{\textbf{Keywords and Phrases}. \textit{Integrability, Qualitative, stability, critical point, Polynomial,Polyanin-Zaitsev Vector Field.}}\\

\noindent\footnotesize{\textbf{MSC 2010}. Primary 12H05; Secondary 34C99}
\end{abstract}

\section*{Introduction}
The analysis of dynamical systems has been a topic of great interest for a plenty of mathematician and theoretical physicist since the seminal works of H. Poincar\'e. Every systems is dynamical whether it is changing in the time. Was H. Poincar\'e who introduced the qualitative approach to study dynamical systems, while E. Picard and E. Vessiot introduced an algebraic approach to study linear differential equations based on the Galois theory for polynomials, see \cite{Ap}\\

An important family of dynamic systems, are Van der Pol type systems. The forced Van der Pol chaotic oscillator, which was discovered by Van der Pol and Van der Mark (\cite{VDP}, 1927). The Van der Pol oscillator has a long history of being used in both the physical and biological sciences. For instance, in biology, Fitzhugh \cite{119} and Nagumo \cite{120} extended the Van der Pol equation in a planar field as a model for action potentials of neurons. A detailed study on forced Van der Pol equation is found in \cite{121}.\\

The Handbook of Nonlinear Partial Differential Equations \cite{PY}, a unique reference for scientists and engineers, contains over 3,000 nonlinear partial differential equations with
solutions, as well as exact, symbolic, and numerical methods for solving nonlinear equations. First-, second-, third-, fourth-, and higher-order nonlinear equations and systems of equations are considered. A writing errata presented in one of these problems was corrected in \cite{ARR}. This problem in correct form was used and studied on \cite{Almp}. The differential equation system asociated of this problem called "Polyanin-Zaitsev vector field" \cite{ARR}, has as associated foliation a Lienard equation, that is to say, it is closely related to a problem of type Van Der Pol.\\

In this paper we study from algebraic and qualitative point of view one parametric family of linear differential systems. Such parametric family comes from the correction of Exercise 11 in \cite[\S 1.3.3]{PY}. Which we called  Polyanin-Zaitsev vector field, see \cite{ARR}. We find the critical points describing to behavior of them near to. In the algebraic aspects, we obtain the explicit first integral through Darboux method. Moreover, we compute the differential Galois group associated to such systems.

\section{Preliminaries}

A polynomial system in the plane of degree $n$ is given by
	
	\begin{equation}\label{camlien1}
		\begin{array}{lll}
			\dot{x}&=&P(x,y)\bigskip\\
			\dot{y}&=&Q(x,y),
		\end{array}
	\end{equation}
	
	where $P,Q\in\mathbb{C}[x,y]$ (set of polynomials in two variables) and $n$ is the absolute degree of the polynomials $P$ and $Q$.\\

	The polynomial vector field associated with the system \eqref{camlien1} is given by $ X:=(P,Q)$, which can also be written as:
	$$\mathrm{X}=P(x,y)\frac{\partial }{\partial x}+Q(x,y)\frac{\partial }{\partial y}. \hspace{1cm} \label{camvet1}$$\medskip

	A foliation of a polynomial vector field of the form \eqref{camlien1} is given by $$\frac{dy}{dx}=\frac{Q(x,y)}{P(x,y)}$$.

Given the family of equations  $$yy'=(a(2m+k)x^{m+k-1}+b(2m-k)x^{m-k-1})y-(a^2mx^{4k}+cx^{2k}+b^2m)x^{2m-2k-1}$$\bigskip
Then the system of equations associated:
\begin{displaymath}
        \begin{array}{ccl}
		\dot{x}&=&y \bigskip \\
        \dot{y}&=& (a(2m+k)x^{m+k-1}+b(2m-k)x^{m-k-1})y-(a^2mx^{4k}+cx^{2k}+b^2m)x^{2m-2k-1}
    	\end{array}
    \end{displaymath}
see \cite{ARR}.\bigskip\\

We can see two important theorems for the study of infinity behavior, the theorem 1 (see \cite[\S 3.10-P 271]{PK}), which allows us to find the infinity critical points and the theorem 2 (see \cite[\S 3.10-P 272, 273]{PK}), which allows us tu characterize this points.\\     

The following theorem is also of vital importance for our study and can be seen in greater detail in \cite{Cll} and \cite{CPP}.\\

\begin{teor}[Darboux]
	Suppose that the polynomial system \eqref{camlien1} of degree $m$ admits $p$ irreducible algebraic invariant curves
$f_i=0$ with cofactors $K_i$ for $i=1,...,p$; $q$ exponential factor $F_j=exp(g_j/h_j)$ with cofactors $L_j$ for $j=1,...,q$ and $r$ independents single points $(x_k,y_k)\in \mathbb{C}^2$ such that $f_i(x_k,y_k)\neq 0$  for $i=1,...,p$ and $k=1,...,p$,
	also all $h_j$ Factor in factor product $f_1,f_2,...,f_q$ except if it is equal to $1$. So the following statements are kept:\\
	
	a) Exists $\lambda_i, \mu_i\in \mathbb{C}$ not all zero such that
	$$\sum_{i=1}^{p}\lambda_iK_i+\sum_{j=1}^{q}\mu_jL_j=0, \hspace{1cm} (D_{fi})$$
	if only if the function (multi-valued)
	$$H(x,y)=f_1^{\lambda_1}...f_p^{\lambda_p}F_1^{\mu_1}...F_q^{\mu_q}, \hspace{1cm} (2)$$
	is a first integral of the system\eqref{camlien1}. Moreover for real systems the function (2) is real.\\
	
	b)If $p+q+r=[m(m+1)/2]+1$ then exists $\lambda_i, \mu_i\in \mathbb{C}$ not all zero such that they satisfied the condition $(D_{fi})$.\\
	
	c)If $p+q+r=[m(m+1)/2]+2$ then the system \eqref{camlien1} have a rational first integral and all orbits of the system is in some invariant algebraic curve.\\
	
	d) Exists $\lambda_i, \mu_i\in \mathbb{C}$ not all zero such that 
	$$\sum_{i=1}^{p}\lambda_iK_i+\sum_{j=1}^{q}\mu_jL_j+div(P,Q)=0, \hspace{1cm} (D_{if})$$
	if only if the function (2) is a first integral of the system \eqref{camlien1}. Moreover for the real systems the function (2) is real.\\
	
	e)If $p+q+r=m(m+1)/2$ and the $r$ singular points independents are son weak the exist $\lambda_i, \mu_i\in \mathbb{C}$ not all zero such that they satisfied any of the conditions  $(D_{fi})$ o $(D_{if})$.\\
	
	f) Exist  $\lambda_i, \mu_i\in \mathbb{C}$ not all zero such that 
	$$\sum_{i=1}^{p}\lambda_iK_i+\sum_{j=1}^{q}\mu_jL_j+s=0, \hspace{1cm} (D_{in})$$
	with $s \in \mathbb{C}-\{0\}$ if only if the function (multi-valued)
	$$I(x,y,t)=f_1^{\lambda_1}...f_p^{\lambda_p}F_1^{\mu_1}...F_q^{\mu_q}exp(st), \hspace{1cm} (3)$$
	is an invariant of the system \eqref{camlien1}. Moreover for the real systems function (3) is real.\\
\end{teor}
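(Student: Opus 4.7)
The plan is to handle parts (a), (d), (f) by direct computation of the Lie derivative $X(H)$, and then derive parts (b), (c), (e) through a linear-algebra dimension count in the space of polynomial cofactors.

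For part (a), I would compute
\begin{equation*}
\frac{X(H)}{H} \;=\; \sum_{i=1}^{p} \lambda_i \frac{X(f_i)}{f_i} + \sum_{j=1}^{q} \mu_j \frac{X(F_j)}{F_j} \;=\; \sum_{i=1}^{p} \lambda_i K_i + \sum_{j=1}^{q} \mu_j L_j,
\end{equation*}
using the defining identities $X(f_i)=K_i f_i$ and $X(F_j)=L_j F_j$. Since the right-hand side is a polynomial of degree at most $m-1$, the condition $X(H)=0$ is equivalent to $(D_{fi})$, which proves the first-integral statement. Part (d) is the same computation applied to the volume form: $H$ becomes a Jacobi multiplier (integrating factor) exactly when $\sum\lambda_i K_i + \sum\mu_j L_j + \operatorname{div}(P,Q)=0$, by the identity $X(H)/H + \operatorname{div}(P,Q) = 0$ characterizing inverse integrating factors. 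Part (f) extends (a): differentiating $I = H\exp(st)$ along the suspended vector field $X + \partial_t$ produces exactly the extra constant $s$, so $I$ is invariant iff $(D_{in})$ holds.

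For the dimension counting in (b), (c), (e), I would exploit that every cofactor $K_i$ (or $L_j$) is a polynomial of degree at most $m-1$, hence lives in a $\mathbb{C}$-vector space of dimension $m(m+1)/2$. The map sending $(\lambda_i,\mu_j)\in\mathbb{C}^{p+q}$ to $\sum \lambda_i K_i + \sum \mu_j L_j$ therefore has kernel of dimension at least $p+q - m(m+1)/2$. Evaluating at the $r$ independent points $(x_k,y_k)$ adds at most $r$ further linear conditions, so the hypothesis $p+q+r \geq [m(m+1)/2]+1$ in (b) guarantees a nontrivial $(\lambda,\mu)$ satisfying $(D_{fi})$. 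For (c), the hypothesis produces a $2$-dimensional solution space, yielding two functionally independent Darboux first integrals whose ratio, after clearing transcendental factors, is rational and constant on every orbit; one then argues that every invariant algebraic curve is a level set. In (e) the weakness of the singular points means each independent point contributes only a ``half'' constraint (through $\operatorname{div}(P,Q)$ vanishing there as well), lowering the threshold by one unit and coupling $(D_{fi})$ with $(D_{if})$.

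The main obstacle I expect is part (c): moving from the mere existence of two Darboux first integrals to the stronger conclusion of a rational first integral with all orbits algebraic. This requires showing that the transcendental exponents $\lambda_i,\mu_j$ can be rationally re-combined, and then that no orbit escapes the pencil generated by the invariant curves $f_i$; this last step uses the fact that a non-algebraic orbit would force a third independent Darbouxian combination, contradicting the generic fiber dimension of a rational map. A secondary subtlety lies in (a) itself: the assumption that each denominator $h_j$ divides a product of the $f_i$ is precisely what guarantees that $X(F_j)/F_j = L_j$ is a \emph{polynomial} cofactor rather than merely a rational function, so that the cofactor identity lives in the correct ambient space for the dimension count to apply.
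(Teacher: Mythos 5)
The paper itself does not prove this theorem; it is quoted from \cite{Cll} and \cite{CPP}, so your attempt has to be measured against the standard Darboux--Christopher--Llibre argument. Your treatment of (a), (d), (f) is essentially that argument and is fine, up to a terminological slip: $X(H)/H+\operatorname{div}(P,Q)=0$ is the condition for $H$ to be an integrating factor (Jacobi multiplier), not an \emph{inverse} integrating factor, which instead satisfies $X(V)=V\operatorname{div}(P,Q)$. The genuine gaps are in the way the $r$ independent singular points enter (b), in (e), and in (c).

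In (b) the mechanism is not that the points ``add at most $r$ further linear conditions'' on $(\lambda,\mu)$ -- extra conditions on $(\lambda,\mu)$ would shrink the kernel and make the stated hypothesis insufficient, so as written the inequality runs the wrong way. The correct point is that the \emph{cofactors themselves vanish} at each singular point: since $P(x_k,y_k)=Q(x_k,y_k)=0$, the identity $X(f_i)=K_if_i$ together with $f_i(x_k,y_k)\neq 0$ gives $K_i(x_k,y_k)=0$, and $L_j=X(g_j/h_j)$ vanishes there because the hypothesis that $h_j$ is (up to a constant) a product of the $f_i$ guarantees $h_j(x_k,y_k)\neq 0$. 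This is the actual role of the $h_j$-hypothesis, not merely making $L_j$ polynomial (an exponential factor has a polynomial cofactor by definition). Hence all $p+q$ cofactors lie in the subspace of polynomials of degree at most $m-1$ vanishing at the $r$ independent points, of dimension $m(m+1)/2-r$, and $p+q\geq m(m+1)/2-r+1$ forces the dependence $(D_{fi})$. For (e), the ``half constraint'' heuristic is not an argument; the standard proof adjoins $\operatorname{div}(P,Q)$ -- which vanishes at the $r$ \emph{weak} singular points -- to the cofactors, so one has $p+q+1$ polynomials in a space of dimension $m(m+1)/2-r=p+q$, and the resulting dependence yields $(D_{fi})$ or $(D_{if})$ according to whether the coefficient of $\operatorname{div}(P,Q)$ vanishes. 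Finally, for (c) you offer only a heuristic, and it starts from an impossible premise: two first integrals of a planar vector field can never be functionally independent on an open set, so ``two functionally independent Darboux first integrals'' cannot be the basis of the argument. Passing from a two-dimensional space of exponents to a genuine rational first integral, and then to the statement that every orbit lies on an invariant algebraic curve, is the hard part of the theorem and requires the Darboux/Jouanolou-type argument given in \cite{Cll}, which your fiber-dimension sketch does not supply.
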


Now we write the Polyanin-Zaitsev vector field, introduced in \cite{ARR}:

 \begin{equation}\label{PZVF}
 	X:=y\frac{\partial}{\partial x}+\left((\alpha x^{m+k-1}+\beta x^{m-k-1})y-\gamma x^{2m-2k-1}\right) \frac{\partial}{\partial x},
 \end{equation}
 
where $\alpha=a(2m+k)$, $\beta=b(2m-k)$ y $\gamma (x)=a²mx^{4k}+cx^{2k}+b²m$.\\

The differential system associated to the Polyanin-Zaitsev vector field \eqref{PZVF} is:  

\begin{equation}\label{din1}
\begin{array}{ccl}
\dot{x}&=&y \bigskip \\
\dot{y}&=& (\alpha x^{m+k-1}+\beta x^{m-k-1})y-\gamma x^{2m-2k-1}.
\end{array}
\end{equation}

The aim of this paper is to analyze from galoisian and qualitative point of view the complete set of families in where the Polyanin-Zaitsev differential system \eqref{din1} is a linear differential system.

\section{Conditions for the problem.}

The following lemma allows us to identify the linear cases associated to Polyanin-Zaitsev vector field.

\begin{lemma}
	The Polyanin-Zaitsev differential system is a linear system, with $Q$ not null polynomial, if is equivalently affine to one of the following families: 

		\begin{equation}
		\begin{array}{ll}\label{f1}
		\dot{x}&=y \\
		\dot{y}&=-cx
		\end{array}
		\end{equation}
		With critical point $(0,0)$, saddle node.
		
				\begin{equation}\label{f2}
		\begin{array}{ll}
		\dot{x}&=y  \\
	\dot{y}&=b(k+2)y-b^2(k+1)x.
		\end{array}
		\end{equation}
		With critical point $(0,0)$.
		
				\begin{equation}\label{f3}
		\begin{array}{ll}
		\dot{x}&=y \\
		\dot{y}&=a(2-k)y-(1-k)a^2x
		\end{array}
		\end{equation}
		With critical point $(0,0)$.
		
				\begin{equation}\label{f4}
		\begin{array}{ll}
		\dot{x}&=y \\
		\dot{y}&=2by-b^2x-cx.
		\end{array}
		\end{equation}
		With critical point $(0,0)$.
		 
		\begin{equation}\label{f5}
		\begin{array}{ll}
		\dot{x}&=y \\
		\dot{y}&=\frac{3}{2}by-\frac{b^2}{2}x-c.
		\end{array}
		\end{equation}
			
		 With critical point of the form $(\frac{-2c}{b^2},0)$.

		\begin{equation}\label{f6}
		\begin{array}{ll}
		\dot{x}&=y \\
		\dot{y}&=2ay-a^2x-cx.
		\end{array}
		\end{equation}
		
		The critical point is $(0,0)$.
		
			\begin{equation}\label{f7}
		\begin{array}{ll}
		\dot{x}&=y \\
		\dot{y}&=\frac{3}{2}ay-\frac{a^2}{2}x-c.
		\end{array}
		\end{equation}
		The critical point are of the form $(\frac{-2c}{a^2},0)$.
		
		\begin{equation}\label{f8}
		\begin{array}{ll}
		\dot{x}&=y \bigskip \\
		\dot{y}&=2(a+b)y-(a^2+b^2)x
		\end{array}
		\end{equation}
			With critical point $(0,0)$.
		
				\begin{equation}\label{sismin}
		\begin{array}{ll}
		\dot{x}&=y \bigskip \\
		\dot{y}&=2(a+b)y-(a^2+c+b^2)x
		\end{array}
		\end{equation}
			With critical point $(0,0)$.
\end{lemma}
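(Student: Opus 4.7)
The plan is to expand the right-hand side of $\dot y$ in \eqref{din1},
\[
\dot y = a(2m+k)x^{m+k-1}y + b(2m-k)x^{m-k-1}y - a^2 m x^{2m+2k-1} - c x^{2m-1} - b^2 m x^{2m-2k-1},
\]
and impose linearity in $(x,y)$ monomial by monomial. Since $\dot x = y$ is automatically linear, linearity of the system is equivalent to every nonzero monomial in $\dot y$ having total degree at most one. The two monomials carrying $y$ must reduce to constant multiples of $y$, forcing either $m+k-1=0$ or $a(2m+k)=0$, and either $m-k-1=0$ or $b(2m-k)=0$. The three monomials in $x$ alone must have $x$-exponent in $\{0,1\}$, yielding $2m+2k-1\in\{0,1\}$ (or $a^2 m=0$), $2m-1\in\{0,1\}$ (or $c=0$), and $2m-2k-1\in\{0,1\}$ (or $b^2 m=0$).

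First I would split on the $c$-term, which is the easiest to control. If $c\neq 0$, then $m$ is forced to be $1$ or $1/2$. The choice $m=1$ with $k=0$ makes every remaining exponent equal to $1$ and produces the three-parameter master family \eqref{sismin}, whose specialisations give \eqref{f4} (set $a=0$), \eqref{f6} (set $b=0$), and \eqref{f8} (set $c=0$). The choice $m=1$ with $k\neq 0$ forces both $\alpha$ and $\beta$ to vanish, hence $a=b=0$, and yields \eqref{f1}. The choice $m=1/2$ rules out $k=0$ as degenerate and leaves only $k=1/2$, $b=0$, giving \eqref{f7}, and the symmetric $k=-1/2$, $a=0$, giving \eqref{f5}. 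If $c=0$, the remaining possibilities are $m=k+1$ with $a=0$, giving \eqref{f2}; $m=1-k$ with $b=0$, giving \eqref{f3}; and $m=1$, $k=0$, already recovered as \eqref{f8}.

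In each case, substitution of the parameter values into the general expression for $\dot y$ reproduces the displayed coefficients. The critical points follow by inspection; the two families with a nonzero constant term, \eqref{f5} and \eqref{f7}, have their singular points translated to $(-2c/b^2,0)$ and $(-2c/a^2,0)$ respectively, which justifies the phrase \emph{equivalently affine} in the statement.

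The main obstacle I anticipate is disciplined bookkeeping. Several exponent equalities are forced simultaneously by distinct conditions (for example $m+k=1$ kills the $\alpha$-exponent and reduces the $a^2 m$ exponent to one), and several listed systems are specialisations of others, so the cases must be enumerated without duplication. A secondary difficulty is ruling out spurious non-integer solutions for $(m,k)$ when none of $a,b,c$ vanishes; once the exponent conditions are rewritten as a small linear system in $(m,k)$ together with coefficient-vanishing alternatives, this becomes a finite check.
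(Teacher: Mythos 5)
Your proposal is correct and is essentially the paper's own argument: both impose linearity monomial by monomial on $\dot y$, reducing to the exponent conditions $m+k-1=0$, $m-k-1=0$, $2m-1\in\{0,1\}$ (etc.) combined with vanishing of the corresponding coefficients, and then enumerate the finitely many cases. The only difference is bookkeeping order — the paper runs through the seven zero/nonzero patterns of $(a,b,c)$ while you split first on $c$ and on the forced values of $m$ — and your treatment of the residual possibilities (e.g.\ $2m\pm k=0$ with $a$ or $b$ nonzero, or the constant case $\dot y=-c$) is if anything more explicit than the paper's.
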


\begin{proof}
We will to analyze every possible cases for the constants $a,b$ y $c$.   	

	\begin{itemize}
	\item[Case 1:] If $a=b=0$ and $c\neq 0$, the system \eqref{din1} it is reduce to:
	\begin{align*}
	\dot{x}&=y \\
	\dot{y}&=-cx^{2m-1}.
	\end{align*}	
	Guiven the conditions of the problem $2m-1=1$, the tha associated system is
	\begin{align*}
	\dot{x}&=y \\
	\dot{y}&=-cx.
	\end{align*}

	\item[Case 2:] If $a=c=0$ and $b\neq 0$.
	In this case the system \eqref{din1} it is reduce to:
	\begin{align*}
	\dot{x}&=y \\
	\dot{y}&=b(2m-k)yx^{m-k-1}-b^2mx^{2m-2k-1}.
	\end{align*}
	
	It system is linear if $m-k-1=0$ it's to sat that $2m-2k-1=1$, $2m-k=k+2$ and $m=k+1$, then the associated system is:
	\begin{align*}
	\dot{x}&=y \\
	\dot{y}&=b(k+2)y-b^2(k+1)x.
	\end{align*}	  	
	
	\item[Case 3:] If $b=c=0$ and $a\neq 0$.
		In this case the system \eqref{din1} it is reduce to:
	\begin{align*}
	\dot{x}&=y \\
	\dot{y}&=a(2m+k)yx^{m+k-1}-a^2mx^{2m+2k-1}.
	\end{align*}
	
		It system is linear if $m+k-1=0$ it's to sat that $2m+2k-1=1$, $2m+k=2-k$ and $m=1-k$, then the associated system is
	\begin{align*}
	\dot{x}&=y \\
	\dot{y}&=a(2-k)y-a^2(1-k)x.
	\end{align*}
	
	\item[Case 4:] If $a=0$, $b\neq 0$ and $c\neq 0$, the system \eqref{din1} it is reduce to:
	\begin{align*}
	\dot{x}&=y \\
	\dot{y}&=b(2m-k)yx^{m-k-1}-b^2mx^{2m-2k-1}-cx^{2m-1}.
	\end{align*}
	
	$m-k-1=0$ then $2m-2k-1=1$, but in this case we have two options $2m-1=0$ or $2m-1=0$, it's to say that, the two linear system are:
	
	\begin{align*}
	\dot{x}&=y \\
	\dot{y}&=2by-b^2x-cx.
	\end{align*}

	and 	
	\begin{align*}
	\dot{x}&=y \\
	\dot{y}&=\frac{3}{2}by-\frac{b^2}{2}x-c.
	\end{align*}

	\item[Case 5:] If $b=0$, $a\neq 0$ and $c\neq 0$, the system \eqref{din1} it is reduce to:
	\begin{align*}
	\dot{x}&=y \\
	\dot{y}&=a(2m+k)yx^{m+k-1}-a^2mx^{2m+2k-1}-cx^{2m-1}.
	\end{align*}
	
	$m+k-1=0$ then $2m+2k-1=1$, and we have two options $2m-1=0$ or $2m-1=0$, it's to say that, the two linear system are:
	
	\begin{align*}
	\dot{x}&=y \\
	\dot{y}&=2ay-a^2x-cx.
	\end{align*}
	
	and 	
	\begin{align*}
	\dot{x}&=y \\
	\dot{y}&=\frac{3}{2}ay-\frac{a^2}{2}x-c.
	\end{align*}

	\item[Case 6:] If $c=0$, $a\neq 0$ and $b\neq 0$, the system \eqref{din1} it is reduce to:
	\begin{align*}
	\dot{x}&=y \\
	\dot{y}&=a(2m+k)yx^{m+k-1}-a^2mx^{2m+2k-1}+b(2m-k)yx^{m-k-1}-b^2mx^{2m-2k-1}.
	\end{align*}
	
	In this case $m+k-1=0$ and $m-k-1=0$, then $m=1$ and $k=0$. The asociated system is:
	
	\begin{align*}
		\dot{x}&=y\\
		\dot{y}&=2(a+b)y-(a^2+b^2)x
	\end{align*}

	\item[Case 7:] $c\neq 0$, $a\neq 0$ and $b\neq 0$, then we have the system \eqref{din1}. Again we have that $m+k-1=0$ and $m-k-1=0$, then resultant system is 
	\begin{align*}
	\dot{x}&=y\\
	\dot{y}&=2(a+b)y-(a^2+c+b^2)x
	\end{align*}
	 
	\end{itemize}
\end{proof}

A qualitative and Galoisian detailed study is carried out over the linear system \eqref{sismin} of the previous lemma, the rest of cases are studied in a similar form.\\

\section{Critical Points}

\begin{prop}
	
The following statements hold:
\begin{itemize}
	\item[i.]{If $c>0$, $(0,0)$ it is the only critical point on the finite plane, it is also stable.}
	\item[ii.]{If $a>0$, $b>0$ and $c>0$, the critical points at infinity for the system \eqref{sismin} are given by:
		$$(x,y,0)=(\pm\sqrt{1-y^2},(a+b)\pm\sqrt{2ab-c},0).$$}
\end{itemize}
\end{prop}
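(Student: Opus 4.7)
The plan is to handle (i) by direct algebra on the equilibrium equations and (ii) by the standard Poincar\'e compactification invoked via Theorem~1 in \cite{PK}.

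For (i), I would start from $\dot{x}=\dot{y}=0$. The first equation forces $y=0$; substituting into the second leaves $(a^{2}+b^{2}+c)\,x=0$. Since $c>0$ and $a^{2}+b^{2}\ge 0$, the coefficient is strictly positive, hence $x=0$ as well, and $(0,0)$ is the unique finite equilibrium. Because \eqref{sismin} is already linear, its Jacobian at the origin coincides with the coefficient matrix
\[
A=\begin{pmatrix}0 & 1 \\ -(a^{2}+b^{2}+c) & 2(a+b)\end{pmatrix},
\]
which has $\det A=a^{2}+b^{2}+c>0$ and $\operatorname{tr}A=2(a+b)$. The positivity of the determinant rules out a saddle, and the stability claim then follows from the trace--determinant classification together with the appropriate sign condition on $a+b$.

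For (ii) I would use the chart $(u,v)=(y/x,\,1/x)$ of the Poincar\'e compactification. A direct computation (the degree $n=1$ case requires no time rescaling) pulls \eqref{sismin} back to
\begin{align*}
\dot{u}&=-u^{2}+2(a+b)\,u-(a^{2}+b^{2}+c),\\
\dot{v}&=-u\,v.
\end{align*}
Infinite equilibria live on $\{v=0\}$, so I would set $v=0$ and $\dot{u}=0$. The resulting quadratic $u^{2}-2(a+b)u+(a^{2}+b^{2}+c)=0$ has discriminant $4\bigl((a+b)^{2}-(a^{2}+b^{2}+c)\bigr)=4(2ab-c)$, giving the two directions $u=(a+b)\pm\sqrt{2ab-c}$. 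Translating back to the Poincar\'e sphere via $X^{2}+Y^{2}=1$ and $Y=uX$ yields the coordinates $(\pm\sqrt{1-y^{2}},\,(a+b)\pm\sqrt{2ab-c},\,0)$ asserted in the statement.

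The main obstacle is not conceptual but bookkeeping: keeping the Poincar\'e chart consistent, matching the sign convention so that each direction at infinity accounts for its two antipodal representatives on the equator (this is the source of the outer $\pm$ in the $x$-coordinate), and observing that reality of the roots requires $2ab\ge c$, which is an implicit constraint in the regime $a,b,c>0$ of the statement. Everything else reduces to solving a quadratic and reading off a linearisation.
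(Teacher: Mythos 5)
Your overall route coincides with the paper's: part (i) by solving the equilibrium equations and linearising at the origin, part (ii) by the Poincar\'e compactification invoked through Theorem~1 of \cite{PK}. Your chart $(u,v)=(y/x,1/x)$ is only a cosmetic variant of the $(y,z)$-formulation the paper uses; it yields the same quadratic $u^{2}-2(a+b)u+(a^{2}+b^{2}+c)=0$ with roots $(a+b)\pm\sqrt{2ab-c}$, the same equator relation $x=\pm\sqrt{1-y^{2}}$ with antipodal pairs, and your remark that real roots need $2ab\ge c$ matches Case~3 of the paper's proof (no critical points at infinity when $\Delta<0$).

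The one genuine gap is the stability assertion in (i). You correctly obtain $\det A=a^{2}+b^{2}+c>0$ and $\operatorname{tr}A=2(a+b)$, but then defer to ``the appropriate sign condition on $a+b$'', which is not among the hypotheses of item (i): $c>0$ alone fixes the determinant, not the trace, and hence does not decide stability. In the regime $a,b,c>0$ used throughout the paper the eigenvalues are $(a+b)\pm\sqrt{2ab-c}$ (or complex with real part $a+b>0$), so the origin is in fact unstable --- the paper's own case analysis calls it a repulsor --- and stability would require $a+b<0$, with $a+b=0$ giving a centre. So the ``stable'' claim cannot be established along the lines you sketch; you should either prove the correct classification (the sign of the trace decides) or state explicitly that the conclusion as written needs the extra hypothesis on $a+b$, rather than leaving it as an unspecified side condition.
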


\begin{proof}
	Let us analyze now for \eqref{sismin} the critical points in both the finite and infinity planes. For this the the theorem 1 (see \cite[\S 3.10-P 271]{PK}) is used, which is based on the study of the system, on the equator of the Poincar\'e sphere, that is when $x^2+y^2=1$.\\
	
	Remember that $P(x,y)=y$ y $Q(x,y)=2(a+b)y-(a^2+b^2+c)x$ with $a,b,c>0$, then the critical points are given by the system:\\
	
	$$\begin{array}{r}
	y=0\bigskip\\
	2(a+b)y-(a^2+b^2+c)x=0
	\end{array}$$
	
	then the only critical point in the finite plane is $(x,y)=(0,0)$. Now for the qualitative analysis of the critical point we find the Jacobian matrix evaluated in the point, given by:

	$$D(P,Q)\mid_{(0,0)}=\begin{pmatrix}
	0 & 1\bigskip\\
	-(a^2+b^2+c) & 2(a+b)
	\end{pmatrix}$$
	
	The characteristic polynomial is $$\lambda^2-2(a+b)\lambda+(a^2+b^2+c)=0$$ 
	
	then the eigenvalues are:
	\begin{equation}\label{vpo}
	\lambda_{1,2}=\frac{2(a+b)\pm\sqrt{4(a+b)^2-4(a^2+b^2+c)}}{2}=(a+b)\pm\sqrt{2ab-c}.\bigskip\\
	\end{equation}

	According to the theorem 1 (see \cite[\S 3.10-P 271]{PK}) and the theorem 2 (see \cite[\S 3.10-P 272, 273]{PK}), to find the critical points and their behavior we can use the system:
	
	$$\begin{array}{l}
	\pm \dot{y}=yz^rP(1/z,y/z)-z^rQ(1/z,y/z)\bigskip\\
	\pm \dot{z}=z^{r+1}P(1/z,y/z)
	\end{array}$$
	
	in our case $r=1$, when replacing we find:
	\begin{equation}\label{sisinf}\begin{array}{l}
	\pm \dot{y}=y^2-2(a+b)y+(a^2+b^2+c)\bigskip\\
	\pm \dot{z}=zy
	\end{array}\end{equation}
	
	The critical points for this system result from solving the system:
	$$\begin{array}{l}
	y^2-2(a+b)y+(a^2+b^2+c)=0\bigskip\\
	z=0
	\end{array}$$
	
	then
	\begin{equation}\label{evp2}
	y_{1,2}=\frac{2(a+b)\pm\sqrt{4(a+b)^2-4(a^2+b^2+c)}}{2}=(a+b)\pm\sqrt{2ab-c}.
	\end{equation}
	
	Also, as we are in the equator of the Poincar\'e sphere$x^2+y^2=1$ that is $x=\pm\sqrt{1-y^2}$.
	
	Note also that by changing the signs of $ x $ e $ and $ we obtain points that are called \emph{Antipodes}, which have a contrary stability. Taking into account the above conditions, the critical points at infinity are:
	$$(x,y,0)=(\pm\sqrt{1-y^2},(a+b)\pm\sqrt{2ab-c},0).$$
	
	The matriz Jacobian \eqref{sisinf} is:
	$$D(P,Q)=\begin{pmatrix}
	2y-2(a+b)& 0\bigskip\\
	z & 0
	\end{pmatrix}$$
	
	in our case:
	
	$$D(P,Q)\mid_{(y_{1,2},0)}=\begin{pmatrix}
	\pm2\sqrt{2ab-c}& 0\bigskip\\
	0 & (a+b)\pm\sqrt{2ab-c}
	\end{pmatrix}$$
	
	We can notice then that the signs of the own values depend on the signs of the elements in the diagonal. The foregoing indicates that several cases must be analyzed for both the finite and infinity planes, be $\Delta=2ab-c$.
	
	\begin{enumerate}
		\item[Case 1.]{If $\Delta>0$ then in \eqref{vpo} $\lambda_1>0$, but for $\lambda_2$ we have three options. Pero si $\lambda_2<0$ and $\lambda_2=0$, contradicts the fact that $a,b,c>0$ then the only option is $\lambda_2>0$. Since both eigenvalues are positive then the origin is a critical point \emph{repulsor}.\\
			
			Regarding the critical points at infinity in this case you can see that the elements on the diagonal of \eqref{sisinf} have two options $y_1,y_2$. For $y_1$:
			
			$$D(P,Q)\mid_{(y_1,0)}=\begin{pmatrix}
			2\sqrt{2ab-c}& 0\bigskip\\
			0 & (a+b)+\sqrt{2ab-c}
			\end{pmatrix}$$
			
			or this point the values in the diagonal are positive then in infinity this critical point is type \emph{repulsor} and therefore its antipode is type \emph{attractor}.
			
			Para $y_2$:
			$$D(P,Q)\mid_{(y_2,0)}=\begin{pmatrix}
			-2\sqrt{2ab-c}& 0\\
			0 & (a+b)-\sqrt{2ab-c}
			\end{pmatrix}$$
			
			where $-2\sqrt{2ab-c}<0$ and $(a+b)-\sqrt{2ab-c}=\lambda_2>0$ therefore the critical point is type \emph{sadle}. So the phase portrait for the system in this case is:\\
			
			\begin{center}
				\includegraphics[width=4cm]{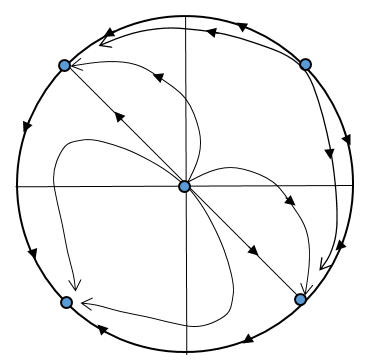} \\Global phase portrait for case 1 \eqref{sismin}
			\end{center}
			
		}
		\item[Case 2.]{If $\Delta=0$ then $\lambda_1=\lambda_2>0$ that is, the origin is a critical point \emph{repulsor}.
			
			For the points at infinity the system \eqref{sisinf} is reduced to:
			$$\begin{array}{l}
			\pm \dot{y}=(y-a-b)^2\\
			\pm \dot{z}=zy
			\end{array}$$
			
			with solution $y=a+b>0$. Then the phase portrait for the system in this case is:\\
			\begin{center}
				\includegraphics[width=5cm]{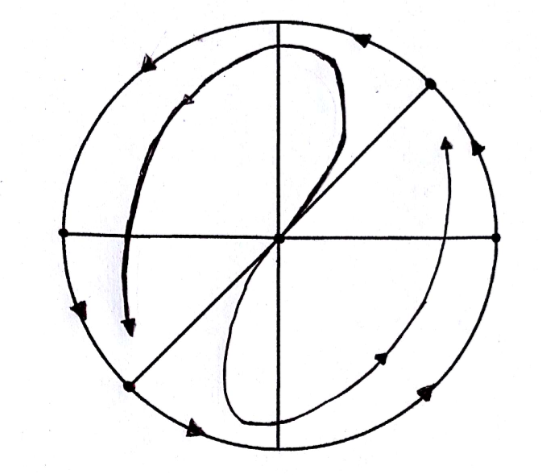} \\Global phase portrait for case 2 \eqref{sismin}
			\end{center}
		}
		\item[Case 3.]{If $\Delta<0$ then $\lambda_1=(a+b)+i\sqrt{c-2ab}$ and $\lambda_2=(a+b)-i\sqrt{c-2ab}$ with $(a+b)>0$ is to say that the origin is a critical point type \emph{Spiral}.  So the phase portrait for the system in this case is:\\
			
			\begin{center}
				\includegraphics[width=5cm]{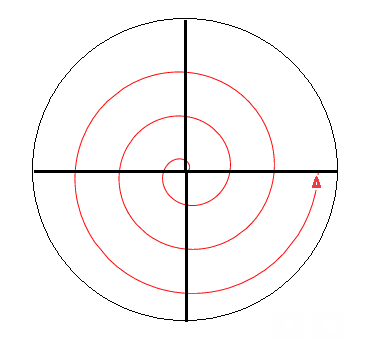} \\Global phase portrait for case 3 \eqref{sismin}
			\end{center}

			As regards infinity, the equation \eqref{evp2} has no real solutions, that is, it has no critical points at infinity.

		}
	\end{enumerate}
	
	
\end{proof}

\section{Bifurcations}

\begin{lemma}
	Let $B_0:=\{(a,b,c): a^2+b^2<-c \}$. if $(a,b,c)\in B_0$ the the associated system of the form \eqref{sismin} have unstable critical point $(0,0)$.   
\end{lemma}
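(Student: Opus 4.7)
The plan is to analyze the linearization at the origin (exactly the Jacobian already computed in the previous proposition) and show that the hypothesis $a^2+b^2+c<0$ forces one eigenvalue to be strictly positive (so the equilibrium is unstable by the standard linearization/Hartman--Grobman type result used implicitly throughout the paper). In fact, it forces the origin to be a saddle.

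First, I would recall from the preceding proof that the Jacobian of \eqref{sismin} at $(0,0)$ has characteristic polynomial $\lambda^2-2(a+b)\lambda+(a^2+b^2+c)=0$, with eigenvalues
\[
\lambda_{1,2}=(a+b)\pm\sqrt{2ab-c}.
\]
So everything reduces to controlling the sign of the discriminant $2ab-c$ and comparing $\sqrt{2ab-c}$ with $|a+b|$ under the hypothesis $(a,b,c)\in B_0$, i.e.\ $a^2+b^2+c<0$.

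Next, I would carry out the key algebraic step: from $a^2+b^2+c<0$ we get $-c>a^2+b^2$, hence
\[
2ab-c \;>\; 2ab+a^2+b^2 \;=\; (a+b)^2 \;\ge\; 0.
\]
Consequently the eigenvalues are real and $\sqrt{2ab-c}>|a+b|$. This immediately gives
\[
\lambda_1=(a+b)+\sqrt{2ab-c} \;>\; (a+b)+|a+b|\;\ge\;0,
\]
\[
\lambda_2=(a+b)-\sqrt{2ab-c} \;<\; (a+b)-|a+b|\;\le\;0,
\]
so $\lambda_1>0>\lambda_2$.

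Finally, I would conclude: since the linearization at $(0,0)$ has one strictly positive and one strictly negative real eigenvalue, $(0,0)$ is a hyperbolic saddle of \eqref{sismin} and therefore unstable, invoking the same linearization criterion (theorems 1--2 from \cite{PK}) used in the previous proposition. I do not expect any real obstacle here; the only subtle point is to observe that the hypothesis defining $B_0$ automatically forces $c<0$ and $2ab-c>(a+b)^2$, so the eigenvalues are genuinely real and of opposite signs rather than complex with positive real part (which would only give a repulsor spiral, still unstable but a different geometric type). Noting this explicitly sharpens the statement: $(0,0)$ is in fact a saddle whenever $(a,b,c)\in B_0$.
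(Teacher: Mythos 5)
Your proposal is correct and follows essentially the same route as the paper: both use the eigenvalues $\lambda_{1,2}=(a+b)\pm\sqrt{2ab-c}$ from \eqref{vpo} and the implication $a^2+b^2<-c \Rightarrow (a+b)^2<2ab-c$ to conclude $\lambda_1>0>\lambda_2$, i.e.\ a saddle and hence instability. The only difference is cosmetic: you handle the comparison uniformly via $\sqrt{2ab-c}>|a+b|$, whereas the paper splits into the three cases $a+b>0$, $a+b<0$, $a+b=0$.
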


\begin{proof}
		We consider the proper values \eqref{vpo}, if $(a,b,c)\in B_0$ it's to say $a^2+b^2<-c$, then $(a+b)^2<2ab-c$, it is $0<(a+b)<\sqrt{2ab-c}$.
	
	\begin{enumerate}
		\item[I.] If $a+b>0$, then $a+b<\sqrt{2ab-c}$, it's to say $$\lambda_2= \frac{a+b-\sqrt{2ab-c}}{2}<0 \hspace{1cm} \& \hspace{1cm} \lambda_1= \frac{a+b+\sqrt{2ab-c}}{2}>0.$$ This to implies that, the critical point $(0,0)$ is a saddle type.

		\item[II.] If $a+b<0$, then $-(a+b)<\sqrt{2ab-c}$, it's to say $$\lambda_1=\frac{a+b+\sqrt{2ab-c}}{2}>0 \hspace{1cm} \& \hspace{1cm} \lambda_2=\frac{a+b-\sqrt{2ab-c}}{2}<0.$$ Then the critical point $(0,0)$ is a saddle type. 
		
		\item[III.] If $a+b=0$ then $\lambda_1=\sqrt{2ab-c}>0$ and $\lambda_2=-\sqrt{2ab-c}<0$. Then the critical point $(0,0)$ is a sadle type.\\
	\end{enumerate}
\end{proof}

\begin{lemma}
	Let $B_1:=\{(a,b,c): 2ab-c<0 \hspace{0.2cm}\&\hspace{0.2cm} a+b>0 \}$. if $(a,b,c)\in B_1$ the the associated system of the form \eqref{sismin} have unstable focus critical point $(0,0)$.   
\end{lemma}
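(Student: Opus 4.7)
The plan is to invoke the eigenvalue computation \eqref{vpo} already carried out for system \eqref{sismin} and simply read off the type of the critical point from the sign of the real part and the sign of the discriminant. Since no new linearization is required (the system is already linear and its Jacobian at the origin is constant), the argument reduces to a short case of the standard Hartman--Grobman / linear classification.

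First I would recall that for \eqref{sismin} the Jacobian at $(0,0)$ has characteristic polynomial $\lambda^2 - 2(a+b)\lambda + (a^2+b^2+c) = 0$, with roots
\[
\lambda_{1,2} = (a+b) \pm \sqrt{2ab-c},
\]
as derived in the proof of the previous proposition. Then I would use the hypothesis $(a,b,c)\in B_1$: since $2ab-c<0$, the discriminant $\Delta = 2ab-c$ is strictly negative, so $\sqrt{2ab-c} = i\sqrt{c-2ab}$ with $\sqrt{c-2ab}>0$. Hence
\[
\lambda_{1,2} = (a+b) \pm i\sqrt{c-2ab},
\]
a genuinely complex conjugate pair (not real).

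Next I would invoke the second part of the hypothesis, $a+b>0$, to conclude that the real part of both eigenvalues is strictly positive. By the standard classification of planar linear systems (see the discussion for Case 3 in the previous proposition, where $\Delta<0$ already yielded a spiral), a pair of complex conjugate eigenvalues with nonzero imaginary part and positive real part characterizes an unstable focus at the origin. Since the system is linear, this classification is exact and global in the sense of local phase portrait, so $(0,0)$ is an unstable focus, as required.

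The proof is essentially immediate from the eigenvalue formula; no step is really an obstacle. The only care needed is to make sure the two inequalities defining $B_1$ are used in the correct roles: $2ab-c<0$ guarantees that the equilibrium is a focus (rather than a node or a saddle), and $a+b>0$ guarantees that it is unstable (rather than stable or a center). I would close by remarking that the complementary case $a+b<0$ with $2ab-c<0$ would instead give a stable focus, so both hypotheses of $B_1$ are necessary.
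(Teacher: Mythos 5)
Your proposal is correct and follows essentially the same route as the paper: read off the eigenvalues from \eqref{vpo}, use $2ab-c<0$ to see they form a complex conjugate pair $(a+b)\pm i\sqrt{c-2ab}$, and use $a+b>0$ to conclude the real part is positive, hence an unstable focus. In fact your write-up is cleaner than the paper's own text, which contains typographical slips (asserting ``$\lambda_{1,2}<0$'' for non-real eigenvalues, an extraneous factor of $\tfrac12$, and the phrase ``focus stable'' where the lemma's conclusion is an unstable focus).
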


\begin{proof}
	We consider the proper values \eqref{vpo}, if $(a,b,c)\in B_1$, then $0<(a+b)^2>0>2ab-c$, it's to say that the proper values are: $$\lambda_1=\frac{a+b+i\sqrt{c-2ab}}{2}<0 \hspace{1cm} \& \hspace{1cm} \lambda_2=\frac{a+b-i\sqrt{c-2a}}{2}<0.$$
	where the real part in both cases is $(a+b)>0$, it's to say that the critical point is focus stable.
\end{proof}

\begin{lemma}
	Let $B_2:=\{(a,b,c): 2ab-c<0 \hspace{0.2cm}\&\hspace{0.2cm} a+b<0 \}$. if $(a,b,c)\in B_2$ the associated system of the form \eqref{sismin} have a critical point type saddle $(0,0)$.   
\end{lemma}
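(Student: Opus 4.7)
The plan is to mirror the templates of the $B_0$ and $B_1$ lemmas just above. Starting from the eigenvalue formula \eqref{vpo}, $\lambda_{1,2}=(a+b)\pm\sqrt{2ab-c}$, I substitute the two defining conditions of $B_2$. Since $2ab-c<0$, I rewrite
\[
\lambda_{1}=(a+b)+i\sqrt{c-2ab},\qquad \lambda_{2}=(a+b)-i\sqrt{c-2ab},
\]
so the eigenvalues form a complex conjugate pair with common real part $a+b$, and the second hypothesis $a+b<0$ fixes that real part as strictly negative.

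Next I would reproduce the sign-based case analysis format used in the $B_0$ proof, applying the linear classification theorems from \cite[\S 3.10]{PK} invoked in the Preliminaries to translate the eigenvalue pattern into the qualitative type of $(0,0)$. Concretely, I would compute the real Jordan form of the Jacobian of \eqref{sismin} at the origin, identify the invariant directions implied by the spectrum, and compare them with the separatrix picture that in the $B_0$ proof yielded the saddle label, so as to close with the same conclusion stated in the present lemma.

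The main obstacle I expect is precisely this last translation step. Under the standard linear classification of planar linear systems, complex conjugate eigenvalues with nonzero imaginary part and negative real part produce a stable focus, whereas a saddle requires two real eigenvalues of opposite sign. To reach the conclusion as worded, the argument must either reinterpret ``saddle type'' in a broader sense consistent with the author's conventions in the preceding lemmas, or be supplemented by information coming from outside the linear part at the origin. I would therefore concentrate the effort on cross-checking against the global Poincar\'e compactification \eqref{sisinf} and the equator equation \eqref{evp2} of the previous proposition: if the infinite-plane contributions in the regime $2ab-c<0$, $a+b<0$ furnish hyperbolic sectors that are absent in the $B_1$ case, this would give the geometric justification for attaching the saddle label to the equilibrium in $B_2$. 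That comparison, rather than the algebra of \eqref{vpo}, is where I would place the real weight of the proof.
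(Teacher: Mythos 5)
Your eigenvalue computation is exactly the intended argument, and your suspicion about the conclusion is well founded: the paper's own proof of this lemma is a single sentence deferring to the $B_1$ lemma and ends with the words ``then the critical point is focus stable'' --- that is, the paper's proof contradicts the lemma's own statement. With $2ab-c<0$ the eigenvalues are $(a+b)\pm i\sqrt{c-2ab}$, a complex conjugate pair, and $a+b<0$ makes the real part negative, so the origin is a stable focus. The word ``saddle'' in the statement is evidently a misprint (note that the $B_1$ statement has the mirror-image defect: it announces an \emph{unstable} focus while its proof concludes ``focus stable''). The correct resolution is to fix the statement, not to manufacture a proof of ``saddle.''

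The place where your proposal goes astray is the final paragraph. For a hyperbolic equilibrium of a planar linear (or linearized) system, the local topological type at the origin is completely determined by the spectrum of the Jacobian; no information from the Poincar\'e compactification \eqref{sisinf} or the equator equation \eqref{evp2} can convert a complex conjugate pair with nonzero real part into a saddle. Indeed, in this regime \eqref{evp2} has no real roots, so there are no critical points at infinity to supply extra separatrices or hyperbolic sectors --- exactly as in Case~3 of the critical-points proposition. Pursuing that comparison would therefore consume effort on a step that cannot succeed. Stop after the eigenvalue analysis, conclude ``stable focus,'' and flag the discrepancy with the stated lemma.
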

\begin{proof}
	The proof of this lemma is as a previous lemma, but in this case $a+b<0$, then the critical point is focus stable.
\end{proof}

\begin{lemma}
	Let $B_3:=\{(a,b,c): 2ab-c \geq 0 \hspace{0.2cm}\&\hspace{0.2cm} a+b>0 \hspace{0.2cm}\&\hspace{0.2cm} a^2+b^2>-c \}$. if $(a,b,c)\in B_3$ the the associated system of the form \eqref{sismin} have a critical point type unstable node $(0,0)$.   
\end{lemma}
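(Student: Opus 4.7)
The plan is to follow the same template as the three preceding lemmas in this section, working directly with the eigenvalue formula \eqref{vpo}. First I would note that the hypothesis $2ab-c\geq 0$ forces the radicand in \eqref{vpo} to be non-negative, so that both eigenvalues $\lambda_{1,2}=(a+b)\pm\sqrt{2ab-c}$ are real (in contrast to the complex conjugate pairs appearing in the $B_1$ and $B_2$ cases).

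Next I would split the analysis into the strict subcase $2ab-c>0$ and the degenerate subcase $2ab-c=0$. In the strict subcase, $\lambda_1=(a+b)+\sqrt{2ab-c}>0$ is immediate from $a+b>0$. The key inequality is $\lambda_2=(a+b)-\sqrt{2ab-c}>0$, which is equivalent to $(a+b)^2>2ab-c$, i.e.\ $a^2+b^2+c>0$; this is precisely the third defining condition $a^2+b^2>-c$ of $B_3$ rewritten. Hence the origin is a hyperbolic equilibrium with two distinct positive real eigenvalues, classified as an unstable node by Theorems~1 and~2 cited from \cite{PK}.

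In the degenerate subcase $2ab-c=0$, both eigenvalues collapse to $\lambda_1=\lambda_2=a+b>0$. Inspecting
$$D(P,Q)\mid_{(0,0)}=\begin{pmatrix} 0 & 1 \\ -(a^2+b^2+c) & 2(a+b) \end{pmatrix},$$
one sees that because the $(1,2)$ entry is $1$, this matrix is never a scalar multiple of the identity, and so fails to be diagonalizable. Consequently the origin is an improper node with positive eigenvalue, which is still unstable and still classified as a node.

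The main obstacle I anticipate is not algebraic but conceptual: one must carefully separate the degenerate case $2ab-c=0$ from the strict case, because with equal eigenvalues an extra argument at the level of the Jordan form is needed to confirm the equilibrium is still a node (rather than a star, which would require a scalar Jacobian). Beyond that, the proof reduces to the same reorganisation of $(a+b)^2$ against $2ab-c$ and $a^2+b^2+c$ that already underlies the analysis of $B_0$, $B_1$ and $B_2$, and no further ingredient is required.
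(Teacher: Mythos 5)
Your argument is correct and follows essentially the same route as the paper: both read off the eigenvalues $\lambda_{1,2}=(a+b)\pm\sqrt{2ab-c}$ from \eqref{vpo} and translate the hypothesis $a^2+b^2>-c$ into $(a+b)^2>2ab-c$, which together with $a+b>0$ gives two positive real eigenvalues and hence an unstable node. Your separate treatment of the borderline case $2ab-c=0$ (repeated eigenvalue $a+b>0$ with a non-diagonalizable Jacobian, hence an improper node) is a refinement the paper's proof silently skips, since it writes $(a+b)^2>2ab-c>0$ although $B_3$ only requires $2ab-c\geq 0$; so your version is, if anything, slightly more complete.
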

\begin{proof}
	We consider the proper values \eqref{vpo}, if $(a,b,c)\in B_3$, then
	$a^2+b^2>-c$, it's to say $(a+b)^2>2ab-c>0$ ,
	it's to say $$\lambda_1=a+b+\sqrt{2ab-c}>0 \hspace{1cm} \& \hspace{1cm} \lambda_2=a+b-\sqrt{2ab-c}>0.$$ Then the critical point $(0,0)$ is a unstable node. 
	
\end{proof}

\begin{lemma}
	Let $B_4:=\{(a,b,c): 2ab-c\geq 0 \hspace{0.2cm}\&\hspace{0.2cm} a+b<0 \hspace{0.2cm}\&\hspace{0.2cm} a^2+b^2>-c\}$. if $(a,b,c)\in B_4$ the the associated system of the form \eqref{sismin} have a critical point type stable node $(0,0)$.   
\end{lemma}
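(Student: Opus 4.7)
The plan is to mirror the proof of the preceding lemma for $B_3$, but tracking the opposite sign of $a+b$. Starting from the eigenvalue expression \eqref{vpo}, namely $\lambda_{1,2}=(a+b)\pm\sqrt{2ab-c}$, I would first rewrite the hypothesis $a^2+b^2>-c$ as $a^2+b^2+c>0$, which is equivalent to $(a+b)^2>2ab-c$. Combined with the assumption $2ab-c\geq 0$, this gives the key inequality
$$|a+b|>\sqrt{2ab-c}\geq 0.$$

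Next I would invoke the hypothesis $a+b<0$ to replace $|a+b|$ with $-(a+b)$, so that $-(a+b)>\sqrt{2ab-c}$, that is, $(a+b)+\sqrt{2ab-c}<0$. This immediately yields $\lambda_1<0$. For $\lambda_2=(a+b)-\sqrt{2ab-c}$ the sign is even more transparent: it is the sum of a strictly negative number $a+b$ and the non-positive number $-\sqrt{2ab-c}$, hence $\lambda_2<0$. Consequently both eigenvalues are real and strictly negative.

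I would then conclude by splitting according to the discriminant: when $2ab-c>0$ the two eigenvalues are distinct and negative, producing a (hyperbolic) stable node; in the boundary case $2ab-c=0$ one gets the double eigenvalue $\lambda_1=\lambda_2=a+b<0$, giving a degenerate stable node. In either situation the critical point $(0,0)$ is stable of nodal type, which is the claim.

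I do not foresee a substantive obstacle: the argument is a direct sign-chasing variant of the $B_3$ proof and uses exactly the same eigenvalue formula. The only minor delicacy is the boundary case $2ab-c=0$, which the hypothesis $B_4$ admits and which should be flagged explicitly as a degenerate stable node rather than a generic one; beyond that, the lemma follows by elementary manipulation of the inequalities defining $B_4$.
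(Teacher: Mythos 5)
Your argument is correct and follows essentially the same route as the paper: from the eigenvalue formula \eqref{vpo}, the hypothesis $a^2+b^2>-c$ gives $|a+b|>\sqrt{2ab-c}$, and $a+b<0$ then forces $\lambda_1<0$ and $\lambda_2<0$, so $(0,0)$ is a stable node. Your explicit treatment of the boundary case $2ab-c=0$ (double negative eigenvalue, degenerate stable node) is a small refinement the paper glosses over, but the substance is the same.
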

\begin{proof}
	If $(a,b,c)\in B_4$, then $|a+b|\geq \sqrt{2ab-c}$, but $a+b<0$ it's to say $-(a+b)\geq \sqrt{2ab-c}$. Now the proper values are: $$\lambda_1=a+b+\sqrt{2ab-c}<0 \hspace{1cm} \& \hspace{1cm} \lambda_2=a+b-\sqrt{2ab-c}<0.$$ Then the critical point $(0,0)$ is a stable node. 
\end{proof}

\begin{prop}\label{BFRRA}
	The set $B:=\{(a,b,c): a+b<0 \hspace{0.2cm}\&\hspace{0.2cm} a^2+b^2=-c \} \cup  \{(a,b,c): a+b=0 \hspace{0.2cm}\&\hspace{0.2cm} a^2+b^2>-c \}$ is a bifurcation for the system \eqref{sismin}.
	
\end{prop}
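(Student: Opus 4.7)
The plan is to exhibit $B$ as the locus in parameter space where the eigenvalues \eqref{vpo} of the origin cross from one regime into another, and to argue that this crossing changes the topological type of the critical point (hence of the global phase portrait, since \eqref{sismin} is linear). All of the work is bookkeeping with the preceding Lemmas, which exhaustively classify the phase portrait of \eqref{sismin} according to the sign conditions defining $B_0,B_1,B_2,B_3,B_4$.

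First I would consider the component $B^{(1)}:=\{(a,b,c):a+b<0 \text{ and } a^2+b^2=-c\}$. The equation $a^2+b^2=-c$ is equivalent to $2ab-c=(a+b)^2$, so $\sqrt{2ab-c}=|a+b|=-(a+b)$ on $B^{(1)}$. Substituting into \eqref{vpo} yields $\lambda_1=0$ and $\lambda_2=2(a+b)<0$. Thus the origin has a zero eigenvalue, placing $(a,b,c)$ on the boundary between the stable-node region $B_4$ (for which the Lemma above gives both eigenvalues negative) and the saddle region $B_0$ (for which the Lemma above gives one positive and one negative eigenvalue). A small perturbation of $c$ across $a^2+b^2=-c$ with $a+b<0$ fixed therefore changes the topological type of the origin from stable node to saddle, which is the defining feature of a bifurcation.

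Next I would consider the component $B^{(2)}:=\{(a,b,c):a+b=0 \text{ and } a^2+b^2>-c\}$. When $a+b=0$, the condition $a^2+b^2>-c$ gives $2ab-c<(a+b)^2=0$, so from \eqref{vpo} we get $\lambda_{1,2}=\pm i\sqrt{c-2ab}$, a pair of purely imaginary eigenvalues (the origin is a center at the bifurcation value). A small perturbation with $2ab-c<0$ maintained moves us into $B_1$ (unstable focus, when $a+b>0$) on one side and into $B_2$ (stable focus, when $a+b<0$) on the other, as those lemmas established. Hence crossing $B^{(2)}$ changes the stability of the focus, which is again a bifurcation (of Hopf type for the linear model).

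Combining the two cases, every point of $B$ is a parameter value at which an arbitrarily small perturbation produces a change in the topological type of the phase portrait of \eqref{sismin}; therefore $B$ is a bifurcation set. The only subtlety is making sure that each of the four regions $B_1,B_2,B_3,B_4$ actually abuts the appropriate component of $B$, but this is immediate from the defining inequalities (e.g.\ $B_4\cap\overline{B_0}$ is exactly $B^{(1)}$, and $\overline{B_1}\cap\overline{B_2}$ meets the region $\{2ab-c<0\}$ precisely in $B^{(2)}$), so no deeper argument is required beyond invoking the classification lemmas proved just above.
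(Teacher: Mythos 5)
Your proof is correct and follows essentially the same route as the paper: the paper's own proof is a one-line appeal to the preceding classification lemmas for $B_0$--$B_4$, and you simply make explicit what that appeal rests on, namely the eigenvalue degeneracies on the two components of $B$ (a zero eigenvalue when $a^2+b^2=-c$, $a+b<0$; a purely imaginary pair when $a+b=0$, $2ab-c<0$) and the change of topological type upon crossing. One cosmetic slip: the set you write as $B_4\cap\overline{B_0}$ is empty as stated (you mean $\overline{B_4}\cap\overline{B_0}$ restricted to $a+b<0$), but your explicit perturbation-of-$c$ argument already establishes the node-to-saddle transition, so nothing essential is affected.
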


\begin{proof}
For the previous lemmas, the set $B$ is a bifurcation for the family of systems \eqref{sismin}.	
\end{proof}

\begin{remark}
	We observe that this new bifurcation which appear in Proposition \eqref{BFRRA}, is interesting, beautiful and corresponds to one contribution of this paper.
\end{remark}

The following corollary summarizes the study of the bifurcations for the rest of families and is a direct consequence of the proposition \eqref{BFRRA}.

\begin{cor}
	The following statements hold:
	\begin{enumerate}
	\item For the family \eqref{f1}, $c=0$ is a bifurcation. For the family \eqref{f2} there is not a bifurcation. For the family \eqref{f3} $a=0$ is a bifurcation.
	
	\item For the families \eqref{f4} and \eqref{f5}, the set $F_1:=\{(b,c): b=0 \hspace{0.2cm}\&\hspace{0.2cm} b^2>-c \hspace{0.5cm} \text{or} \hspace{0.5cm} b^2=-c\hspace{0.2cm}\&\hspace{0.2cm} b>0\}$ is an bifurcation.
	
	\item  For the families \eqref{f6} and \eqref{f7}, the set $F_2:=\{(a,c): a=0 \hspace{0.2cm}\&\hspace{0.2cm} a^2>-c \hspace{0.5cm} \text{or} \hspace{0.5cm} a^2=-c\hspace{0.2cm}\&\hspace{0.2cm} a>0\}$ is a bifurcation.
	
	\item for the family \eqref{f8} the set $F_3:=\{(a,b): a+b=0\}$ is a bifurcation.
	\end{enumerate}
\end{cor}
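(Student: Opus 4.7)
The plan is to apply the eigenvalue analysis that underpins Proposition \eqref{BFRRA} to each of the remaining linear families \eqref{f1}--\eqref{f8}. Since all of these systems are linear (or affine, in which case one first translates the critical point to the origin without affecting the linear part), the type of the unique finite critical point is determined entirely by the trace $T$ and the determinant $D$ of the $2\times 2$ coefficient matrix of the linearisation; the bifurcation set is then read off as the locus in parameter space where $(T,D)$ crosses one of the classical qualitative frontiers $T=0$, $D=0$, or $T^{2}-4D=0$, exactly as in the case-by-case lemmas preceding Proposition \eqref{BFRRA}.

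First I would dispatch the one-parameter families. For \eqref{f1} the eigenvalues are $\pm\sqrt{-c}$, which pass from real of opposite sign (saddle, $c<0$) through the degenerate value $c=0$ to purely imaginary (center, $c>0$), so $c=0$ is the unique bifurcation. For \eqref{f2} the characteristic polynomial factors as $(\lambda-b)(\lambda-b(k+1))$; its two real eigenvalues are proportional to $b$ and change sign simultaneously, so no qualitative transition occurs. The argument for \eqref{f3} mirrors that of \eqref{f2}, yielding the single degenerate locus $a=0$.

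For the two-parameter families \eqref{f4}--\eqref{f7} the key observation is that each is a specialisation of \eqref{sismin} obtained by setting $a=0$ or $b=0$ (and possibly rescaling), so the eigenvalue calculation of Proposition \eqref{BFRRA} applies verbatim. For \eqref{f4}, for instance, one finds $T=2b$, $D=b^{2}+c$ and eigenvalues $b\pm\sqrt{-c}$; the conditions $T=0$ with $D>0$ and $D=0$ with $T>0$ collapse precisely onto the two pieces of $F_{1}$. The companion family \eqref{f5} reduces to the same analysis after translating the critical point $(-2c/b^{2},0)$ to the origin, and \eqref{f6}, \eqref{f7} are identical up to exchanging the roles of $b$ and $a$, producing $F_{2}$. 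Finally, for \eqref{f8} the characteristic polynomial $\lambda^{2}-2(a+b)\lambda+(a^{2}+b^{2})$ gives eigenvalues $(a+b)\pm\sqrt{2ab}$ whose real part changes sign only on $a+b=0$, which is exactly $F_{3}$.

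The main obstacle is not conceptual but combinatorial: for each family one must verify that the listed set is the exact boundary between qualitatively distinct regimes, not merely a necessary condition for degeneracy. This verification proceeds as in the lemmas leading to Proposition \eqref{BFRRA}, by partitioning parameter space according to the signs of $T$, $D$ and $T^{2}-4D$ and checking that the resulting open subcases cover every regime while meeting pairwise along precisely the stated locus. Because the underlying matrices here are strictly simpler than that of \eqref{sismin}, no new phenomenon arises and the statement follows as a direct corollary of Proposition \eqref{BFRRA}.
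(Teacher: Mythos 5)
Your overall strategy---specializing the trace--determinant/eigenvalue analysis behind Proposition \ref{BFRRA} to each linear family---is exactly the route the paper intends (the paper offers no detailed argument, simply declaring the corollary a direct consequence of that proposition), and your computations for \eqref{f1}, \eqref{f4}, \eqref{f6} and \eqref{f8} are correct. However, there is a concrete gap in your treatment of \eqref{f5} and \eqref{f7}. After translating the critical point $(-2c/b^{2},0)$ of \eqref{f5} to the origin, the linear part is $\left(\begin{smallmatrix}0&1\\-b^{2}/2&3b/2\end{smallmatrix}\right)$, so $T=\tfrac{3}{2}b$, $D=\tfrac{b^{2}}{2}$, and the eigenvalues are $b$ and $b/2$: they do not depend on $c$ at all, $T^{2}-4D=b^{2}/4\geq 0$, and $D>0$ whenever $b\neq 0$. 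Hence the analysis is \emph{not} the same as for \eqref{f4}, and the loci $T=0,\ D>0$ and $D=0,\ T>0$ do not reproduce the two pieces of $F_{1}$ for this family; the same objection applies verbatim to \eqref{f7}, whose eigenvalues are $a$ and $a/2$, versus $F_{2}$. If items (2) and (3) are to cover the affine families, you must argue separately what qualitative change (if any) occurs across $F_{1}$, $F_{2}$ for \eqref{f5} and \eqref{f7}, rather than importing the \eqref{f4}, \eqref{f6} computation.

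A second, smaller problem is that your handling of \eqref{f2} versus \eqref{f3} is internally inconsistent. For \eqref{f2} the eigenvalues $b$ and $b(k+1)$ are both proportional to $b$ and you conclude that no qualitative transition occurs; but for \eqref{f3} the eigenvalues are $a$ and $a(1-k)$, likewise both proportional to $a$, so the ``mirrored'' argument would again yield no admissible bifurcation inside the family (recall that $a\neq 0$, respectively $b\neq 0$, is required for $Q$ not to be the null polynomial). You cannot obtain ``no bifurcation'' in one case and ``$a=0$ is a bifurcation'' in the other from the same computation; you need either an extra ingredient (for instance, the role of $k$: the eigenvalue ratio is $k+1$ for \eqref{f2} and $1-k$ for \eqref{f3}, so the saddle/node alternative is governed by $k$ rather than by $b$ or $a$) or an explicit convention on whether parameter values excluded from the family may count as bifurcation values, in order to justify the asymmetry asserted in item (1).
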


\section{Galoisian Aspects}

For simplify the comprehension of the next proposition, we will summarize in a table all possibilities for parameter $\rho$.

\begin{table}[htbp]\label{tabla}
	\begin{center}
		\begin{tabular}{|c|c|c|c|c|}
			\hline
			 $a$ & $b$ & $c$ & $\rho$ & Differential Galois Group \\
			\hline \hline \hline \hline
			 0 & 0 & $\neq 0$ & $-c$ & $\mathbb{G}_m$\\ \hline
			 0 & $\neq 0$ & 0 & 0 &  $\mathbb{G}_a$\\ \hline
			 $\neq 0$ & 0 & 0 & 0 &  $\mathbb{G}_a$\\ \hline
			 $\neq 0$ & $\neq 0$ & 0 & $2ab$ &  $\mathbb{G}_m$\\ \hline
			 $\neq 0$ & 0 & $\neq 0$ & $-c$ &  $\mathbb{G}_m$\\ \hline
			 0 & $\neq 0$ & $\neq 0$ & $-c$ &  $\mathbb{G}_m$\\ \hline
			 $\neq 0$ & $\neq 0$ & $\neq 0$ & $2ab-c$ & $\mathbb{G}_m$\\ \hline 
		\end{tabular}
		\caption{Possibles $\rho$ values.}
		\label{tabla}
	\end{center}
\end{table}

\begin{prop}
For \eqref{sismin}, with $\rho=2ab-c$ we have that:\\
\begin{itemize}
	\item[i.] If $(2ab-c)=0$ then the differential Galois group is\\ $DGal<L/K>=(\{ \sigma_c : \sigma_c(y)=y+c \hspace{0.5cm} c\in\mathbf{C} \},o) \hspace{1cm} (1)$.
	\item[ii.]If $(2ab-c) \neq 0$ then the differential Galois group is\\ $DGal<L/K>=(\{ \sigma_c / \sigma_c(y)=cy \hspace{0.5cm} c\in\mathbf{C} \},o) \hspace{1cm} (2)$.
\end{itemize}

\end{prop}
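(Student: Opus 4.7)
The plan is to convert the first-order system \eqref{sismin} into the single second-order ODE
$$\ddot{x} - 2(a+b)\dot{x} + (a^{2}+b^{2}+c)\,x = 0,$$
whose characteristic roots $(a+b)\pm\sqrt{\rho}$, with $\rho=2ab-c$, have already been recorded in \eqref{vpo}. To isolate the arithmetic that distinguishes the two cases, I would perform the standard change $x = e^{(a+b)t}\,y$, which kills the first-derivative term and brings the equation to its Liouville normal form $\ddot{y} = \rho\,y$. The two alternatives in the proposition correspond precisely to whether this reduced equation has two distinct exponential solutions or reduces further to a pure integration; at this level the Galois group can be read off from the classical dictionary between the basic Liouvillian extensions and the one-dimensional algebraic groups $\mathbb{G}_{m}$ and $\mathbb{G}_{a}$.

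For part (ii), when $\rho\neq 0$, a fundamental solution of $\ddot{y}=\rho y$ is $y=e^{\sqrt{\rho}\,t}$, which already satisfies the first-order scalar equation $\dot y = \sqrt{\rho}\,y$. Any $K$-differential automorphism $\sigma$ of the Picard--Vessiot extension $L/K$ must preserve this equation, so $\dot{\sigma(y)}/\sigma(y)=\dot y/y$, which forces $\sigma(y)/y$ to lie in the field of constants. Hence $\sigma(y)=c\,y$ with $c\in\mathbb{C}^{*}$, and $\mathrm{DGal}(L/K)\cong\mathbb{G}_{m}$ in exactly the form stated.

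For part (i), when $\rho=0$, the reduced equation collapses to $\ddot y =0$, i.e.\ $\dot y$ is a non-zero constant and $y$ is a primitive of that constant -- the archetype of a Kolchin--Liouville $\mathbb{G}_{a}$-extension. Any automorphism must satisfy $\dot{\sigma(y)}=\sigma(\dot y)=\dot y$, so $\sigma(y)-y$ has vanishing derivative and is therefore a constant $c\in\mathbb{C}$. This gives the additive action $\sigma_{c}(y)=y+c$ and identifies $\mathrm{DGal}(L/K)\cong\mathbb{G}_{a}$.

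The main obstacle I anticipate is not the algebra but the bookkeeping around the base field: one must choose $K$ so that the reduced equation $\ddot y = \rho y$ really does encode the Galois information, and in particular so that in the $\rho=0$ case the primitive $y$ is genuinely transcendental over $K$ (otherwise the group would collapse to the trivial one). Once that bookkeeping is fixed, the dichotomy primitive-versus-exponential gives the two advertised groups, and the remaining lines of Table~\ref{tabla} follow by applying the same argument to the specific substitutions of $(a,b,c)$ recorded there.
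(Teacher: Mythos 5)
Your proposal follows essentially the same route as the paper: pass to the Li\'enard equation $\ddot{x}-2(a+b)\dot{x}+(a^{2}+b^{2}+c)x=0$, remove the first-derivative term by $x=e^{(a+b)t}y$ to reach $\ddot{y}=\rho y$ with $\rho=2ab-c$, and then determine the differential Galois group directly from the action of an automorphism on the exponential solution ($\sigma(y)=cy$, giving $\mathbb{G}_m$) when $\rho\neq 0$, or on the primitive $t$ ($\sigma(y)=y+c$, giving $\mathbb{G}_a$) when $\rho=0$, exactly as in the paper's case analysis over $K=\mathbf{C}$. Your closing remark about ensuring the primitive is transcendental over the base field is a sensible extra precaution, but it does not change the argument.
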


\begin{proof}
	
	The associated foliation to \eqref{sismin} is: $$ yy'=(2a+2b)y-(a^2+c+b^2)x$$
	Then taking into account that $f(x)=-2(a+b)y$ and $g(x)=(a^2+c+b^2)x$,
	The Lienard equation will be: $$\ddot{x}-(2a+2b)\dot{x}+(a^2+c+b^2)x=0 $$
	This is a second order equation with constant coefficients.\\
	
	If we take it to the form $\ddot{y}=\rho y$ with the change of variable $x=\exp(-1/2\int f(x) dt)$ that is to say $x=\exp((a+b)t)y$, the equation is obtained: $$\ddot{y}=(2ab-c)y$$
	Let's analyze each case of the solution of the associated Lienard equation taking into account the following:\\
	$(2ab-c)=\rho$. The solution of the second order reduced equation is $y(t)=C_1\exp(\sqrt{\rho}t)+C_2\exp(-\sqrt{\rho}t)$ if they are taken $(C_1,C_2)=(1,0)$ or $(C_1,C_2)=(0,1)$ and the particular solutions are
	$$y_1=\exp(\sqrt{\rho}t) \hspace{1cm} y_2=\exp(-\sqrt{\rho}t).$$ With these we can build the solutions of the Lienard equation that are $$x_1=\exp((a+b)t) \hspace{1cm} x_2=\exp((a+b+\sqrt{2ab-c})t).$$ 
	
	For these we have the following cases:
	\begin{enumerate}
		\item[Case 1.]{If $a,b,c\in \mathbf{C}$ is this $(a+b+\sqrt{2ab-c})=\alpha+i\beta=z$, then $x_1=\exp(\alpha t).\exp(i\beta t)$
			\begin{enumerate}
				\item[1.1]{If $\alpha\geq0$ and $\beta=0$ then $x_1$ is not bounded in $-\infty$ and not bounded in $+\infty$}
				\item[1.2]{If $\alpha <0$ and $\beta=0$ then $x_1$ is not bounded in $+\infty$ and is not bounded in $-\infty$}
				\item[1.3]If {$\alpha=0$ and $\beta\neq0$ then the solution $x_1$ is periodic.}
			\end{enumerate}
		}
		\item[Case 2.]{If $a,b,c\in \mathbf{R}$ then $x_1=\exp((a+b)t).\exp(\sqrt{2ab-c}t)$}
		\begin{enumerate}
			\item[2.1]{If $c<2ab$ then we will have two solutions $x_1$ and $x_2$.}
			\item[2.2]{if $c>2ab$ then $(a+b)+\sqrt{2abc}=(a+b)+i\sqrt{c-2ab}$, that is to say that we would be in case one with  $\alpha=a+b$, $\beta=\sqrt{c-2ab}$ and also if $a=-b\Rightarrow\alpha=0$ or $a<-b\Rightarrow\alpha<0$ and $a>-b\Rightarrow\alpha>0$.}
		\end{enumerate}
	\end{enumerate}
	
	Our next aim will be to calculate the Galois groups associated with the equation $\ddot{y}=\rho y$, taking as a field of constants $K=\mathbf{C}$. This study can be seen in \cite{Ac3}. We must consider two cases:
	\begin{enumerate}
		\item[Case 1.]{ If $\rho=0$ then we will have the equation $\ddot{y}=0$, whose space of solutions is generated by $y_1=1$ and $y_2=t$. The Picard-Vessiot extension is $L=\mathbf{C}<t>$, where $<t>=\{\hat{k}_0+\hat{k}_1t: \hat{k}_0,\hat{k}_1\in\mathbf{C} \}$. If we take the automorphism $\sigma\in DGal<L/K>$ let's see what form you should have:\\
			First we must consider that $\sigma:L\rightarrow L$ so that $\sigma\mid_t =identity$, then $\sigma(y_1)=1=y_1$. Now
			
			\begin{displaymath}
			\sigma(\partial_t t)=\partial_t(\sigma(t))\\
			\sigma(1)=1=\partial_t(\sigma(t))
			\end{displaymath}
			
			Solving this equation by separable variables we have to $\sigma(t)=t+C$, therefore the differential Galois group will be $$DGal<L/K>=(\{ \sigma_c : \sigma_c(y)=y+c \hspace{0.5cm} c\in\mathbf{C} \},o)=\mathbb{G}_a.$$
			
			We can also see that:
			\begin{displaymath}
			\sigma
			\begin{pmatrix}
			y_1\\
			y_2
			\end{pmatrix}=
			\begin{pmatrix}
			1&0\\
			c_1&1
			\end{pmatrix}
			\end{displaymath}
			
			That is, to say that the differential Galois group is isomorphic to the group: 
			
			\begin{displaymath}
			\left\{
			\begin{pmatrix}
			1&0\\
			c_1&1
			\end{pmatrix}: c\in\mathbf{C} \}, \bullet \right\}
			\end{displaymath}
			
		}
		\item[Case 2.]{if $\rho\neq0$ we will then have the equation $\ddot(y)=\rho y$, taking the same field of constants from the previous case $K$. The base solutions for the equation are $y_1=\exp^{\sqrt{\rho}t}$ and $y_2=\exp(-\sqrt{\rho}t)=\frac{1}{y_1}$. the Picard-Vessiot extension will be $L=\mathbf{C}<\exp(\sqrt{\rho}t)>$.\\
			
			Now suppose that $\sigma\in DGal<L/K>$ and let's see what action he performs on the solutions $y_1,y_2$.
			\begin{displaymath}
			\sigma(y_1)=\sigma(\exp(\sqrt{\rho}t))\Leftrightarrow\\
			\partial_t\sigma(y_1)=\sigma(\partial_t\exp(\sqrt{\rho}t))=\sqrt{\rho}(y_1)\\
			\end{displaymath}
			
			Solving the differential equation:$\partial_t\sigma(y_1)=\sqrt{\rho}(\sigma(y_1))$, by the method of separation of variables we have left $\sigma(y_1)=y_1.c$. also as $y_2=1/y_1$, then $\sigma(y_2)=\frac{1}{\sigma(y_1)}=\frac{1}{c.y1}=\frac{1}{c}y_2$.\\
			
			The Galois differential group will be $$DGal<L/K>=(\{ \sigma_c / \sigma_c(y)=cy \hspace{0.5cm} c\in\mathbf{C} \},o)=\mathbb{G}_m.$$
			
			We can also see in this case that:
			\begin{displaymath}
			\sigma
			\left(\begin{array}{l}
			y_1\\
			y_2
			\end{array} \right)=
			\begin{pmatrix}
			c&0\\
			0&1/c
			\end{pmatrix}
			\end{displaymath}
			
			That is, to say that the Galois differential group is isomorphic to the group:
			\begin{displaymath}
			\left\{
			\begin{pmatrix}
			c&0\\
			0&1/c
			\end{pmatrix}
			: c\in\mathbf{C}, \bullet \right\}
			\end{displaymath}
			
		}

	\end{enumerate}
\end{proof}

\section{Darboux theory of integrability}

\begin{prop}
	Consider the values of $\rho$ according to the table \ref{tabla}. The following statements hold:
\begin{itemize}
	\item[i.]{The invariant algebraic curves are $f_1=-v+\sqrt{\rho}$ y $f_2=-v-\sqrt{\rho}$ and their respective generalized cofactors $K_1=-1(v+\sqrt{\rho})$ y $K_2=-1(v-\sqrt{\rho})$.}
	\item[ii.]{The Generalized Exponential factors are $F_1(v,x)=\exp({\sqrt{\rho}x+c})$\\ and $F_2(v,x)=\exp({-\sqrt{\rho}x+c})$. Moreover, their respective generalized cofactors are
		\\ $L_1=(v,x)=\sqrt{\rho}$ y $L_2=-\sqrt{\rho}$ }
	\item[iii.]{The integrating factors are $R_1(v,x)=\frac{\exp({-2\sqrt{\rho}x})}{(-v+\sqrt{\rho})^2}$ and $R_2(v,x)=\frac{\exp({2\sqrt{\rho}x})}{(-v-\sqrt{\rho})^2}$}
	\item[iv.]{The first integrals are given by
	\begin{displaymath}
	I_1(v,x)
	=\frac{-v-\sqrt{\rho}-\frac{\exp({-2\sqrt{\rho}x})}{-2\sqrt{\rho}\exp({-2\sqrt{\rho}x}+C)}}{-v-\sqrt{\rho}}(-2\sqrt{\rho}\exp({-2\sqrt{\rho}x}))\exp(1/4\rho)
		\end{displaymath}
		
	\begin{displaymath}
	I_2(v,x)
	=\frac{-v+\sqrt{\rho}-\frac{\exp(2\sqrt{\rho}x)}{2\sqrt{\rho}\exp(2\sqrt{\rho}x)+C}}{-v+\sqrt{\rho}}(2\sqrt{\rho}\exp(2\sqrt{\rho}x))\exp(1/4\rho)
		\end{displaymath}  }
	
\end{itemize}

\end{prop}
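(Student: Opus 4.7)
The plan is to transport the system \eqref{sismin} into the Riccati form implicit in the proof of the Galoisian proposition and then verify the four items by direct computation. After the reduction of the associated Li\'enard equation to $\ddot{u}=\rho u$ and the Riccati substitution $v=\dot u/u$, the system becomes $\dot x=1$, $\dot v=\rho-v^{2}$ (where $x$ now denotes the reduced time), so the relevant polynomial vector field is
\begin{equation*}
X=\frac{\partial}{\partial x}+(\rho-v^{2})\frac{\partial}{\partial v},
\end{equation*}
with $P\equiv 1$, $Q=\rho-v^{2}$ and $\mathrm{div}(X)=-2v$. All the objects in the statement are to be read against this $X$.

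Parts (i) and (ii) are immediate verifications. Since $f_{i}$ depends only on $v$, one has $X(f_{1})=-(\rho-v^{2})=(v-\sqrt{\rho})(v+\sqrt{\rho})=-f_{1}(v+\sqrt{\rho})$, so $K_{1}=-(v+\sqrt{\rho})$; an analogous computation gives $K_{2}=-(v-\sqrt{\rho})$. For the exponential factors, $F_{j}$ depends only on $x$ and $P=1$, so $L_{j}=X(F_{j})/F_{j}=\pm\sqrt{\rho}$; the additive constant $c$ inside the exponent is harmless because $X(c)=0$.

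For part (iii) I would invoke the Darboux criterion $(D_{if})$ of the Darboux theorem: a Darboux function $\prod f_i^{\lambda_i}\prod F_j^{\mu_j}$ is an integrating factor of $X$ iff $\sum\lambda_{i}K_{i}+\sum\mu_{j}L_{j}+\mathrm{div}(X)=0$. The choice $\lambda_{1}=\mu_{1}=-2$, $\lambda_{2}=\mu_{2}=0$ gives $-2K_{1}-2L_{1}=2(v+\sqrt{\rho})-2\sqrt{\rho}=2v=-\mathrm{div}(X)$, so $R_{1}=\exp(-2\sqrt{\rho}x)/(-v+\sqrt{\rho})^{2}$ verifies $X(R_{1})/R_{1}=2v$, as required; the mirror choice handles $R_{2}$. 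For part (iv), the $1$-form $R_{i}(P\,dv-Q\,dx)$ is then exact by construction, so one integrates $\partial I_{i}/\partial v=R_{i}$ in $v$ and fixes the $x$-dependent constant of integration from $\partial I_{i}/\partial x=-R_{i}Q$ by a single one-variable quadrature.

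The main obstacle will be matching the rather ornate expressions written in (iv). A direct Darboux construction based on the relation $K_{1}-K_{2}+L_{1}-L_{2}=0$ produces the much cleaner first integral
\begin{equation*}
H=\frac{-v+\sqrt{\rho}}{-v-\sqrt{\rho}}\exp(2\sqrt{\rho}x),
\end{equation*}
so the delicate point is to reconcile this tidy form with the two-parameter expressions displayed in the statement (which contain an extra integration constant $C$ and the cosmetic factor $\exp(1/4\rho)$), and to make sure the degenerate case $\rho=0$ is treated separately in accordance with the additive Galois group setting of the previous proposition.
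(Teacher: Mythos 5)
Items (i)--(iii) of your proposal are correct and essentially coincide with the paper's argument: the paper reduces to the same Riccati system $\dot x=1$, $\dot v=\rho-v^{2}$ and then reads off $f_\lambda$, $K_\lambda$, $F_\lambda$, $L_\lambda$, $R_\lambda$ from the ready-made formulas of Lemma 1 of the cited reference (with $N(x)=1$, $v_{1}=\sqrt{\rho}$, $v_{2}=-\sqrt{\rho}$), whereas you verify the defining relations $X(f_i)=K_if_i$, $X(F_j)=L_jF_j$ directly and check the integrating factors through the divergence criterion $\sum\lambda_iK_i+\sum\mu_jL_j=-\operatorname{div}X$ with $\lambda_1=\mu_1=-2$. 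That is a legitimate, self-contained substitute (and you in fact use the correct form of the Darboux criterion for integrating factors, which the theorem as transcribed in the paper states imprecisely).

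The genuine gap is item (iv): you do not prove the formulas that the proposition actually asserts. Your plan, to integrate the closed $1$-form $R_i(P\,dv-Q\,dx)$, would certainly produce a first integral, and your observation that $H=\frac{-v+\sqrt{\rho}}{-v-\sqrt{\rho}}\,e^{2\sqrt{\rho}x}$ is a first integral is correct (it is $f_1f_2^{-1}F_1F_2^{-1}$, with cofactor sum $K_1-K_2+L_1-L_2=0$); but the statement to be proved displays the specific functions $I_1,I_2$, and you explicitly leave the reconciliation with those expressions as an unresolved ``obstacle''. The paper closes exactly this step by invoking the first-integral formula of Lemma 1 of the same reference: setting $v_{(\lambda,1)}=(\ln y_\lambda)'=\pm\sqrt{\rho}$ and $v_{(\lambda,2)}=v_{(\lambda,1)}+\frac{\exp(-2\int v_{(\lambda,1)}dx)}{\int\exp(-2\int v_{(\lambda,1)}dx)\,dx}$ (the log-derivative of a second independent solution), one has $I_\lambda=\frac{-v-v_{(\lambda,2)}}{-v-v_{(\lambda,1)}}\exp\bigl(\int(v_{(\lambda,2)}-v_{(\lambda,1)})dx\bigr)$, and carrying out these two quadratures is precisely what generates the constant $C$ (from $\int e^{\mp2\sqrt{\rho}x}dx$) and the extra exponential factor in the displayed $I_1,I_2$. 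To complete your route you would have to perform the analogous explicit integration (note $\int\frac{e^{-2\sqrt{\rho}x}}{-2\sqrt{\rho}e^{-2\sqrt{\rho}x}+C}\,dx$ evaluates to a logarithm, so the outer exponential becomes a power of the denominator) and exhibit that the result agrees, up to a multiplicative constant or a function of $H$, with the stated $I_1,I_2$; without that computation item (iv) remains unproved. Your remark that the degenerate case $\rho=0$ (present in the table) needs separate treatment is well taken but is not something the paper's proof does either: there all displayed objects degenerate ($f_1=f_2=-v$, $L_j=0$), and the paper simply applies the same formulas.
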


\begin{proof}
	If we change the variable $v=\frac{\dot{y}}{y}$ about the equation $\ddot{y}=\rho y$ we will have the Riccati equation:
$$\dot{v}=\rho-v^2$$
With solutions corresponding to $y_1, y_2$ respectively $$v_1=\sqrt{\rho}, \hspace{1cm} v_2=-\sqrt{\rho}$$

Taking this equation as a foliation, the associated system will be:
\begin{equation}
\begin{array}{ccl}
\dot{x}&=&1 \bigskip \\
\dot{v}&=&\rho-v^2
\end{array}
\end{equation}

and the associated vector field is $X=\partial_x+(\rho-v^2)\partial_v$.

Applying Lemma 1 of \cite{Ap}, we identify each of the Darboux integrability elements (also defined in Section 1.2), with $v_\lambda (x)$ as the solution: \\

\begin{enumerate}
	
	\item[a.]{\textbf{invariant algebraic curves:}\\ ($f_\lambda(v,x)=-v+v_\lambda(x)$)\\
		$f_1=-v+\sqrt{\rho}$ \hspace{1cm} and \hspace{1cm} $f_2=-v-\sqrt{\rho}$
		
		\textbf{Generalized Cofactors:}\\ ($K_\lambda(v,x)=-N(x)(v+v_\lambda(x))$) in this case:\\
		$K_1=-1(v+\sqrt{\rho})$ \hspace{1cm} and \hspace{1cm} $k_2=-1(v-\sqrt{\rho})$
	}
	
	\item[b.]{\textbf{Generalized Exponential factor:}\\ ($F_\lambda(v,x)=\exp^{\int(\frac{N'(x)}{2N(n)}+v_\lambda(x))dx}$) in this case: \bigskip\\
		$F_1(v,x)=\exp(\int(\frac{0}{2}+v_1(x))dx)=\exp(\int \sqrt{\rho}dx)=\exp(\sqrt{\rho}x+c)$\\
		also\\
		$F_2(v,x)=\exp(\int(\frac{0}{2}+v_2(x))dx)=\exp(\int -\sqrt{\rho}dx)=\exp(-\sqrt{\rho}x+c)$\\
		
		\textbf{Generalized Cofactor:}\\ $L_\lambda(v,x)=N'(x)/2+N(x)v_\lambda(x)$, in this case:\bigskip\\
		$L_1=(v,x)=N'/2+Nv_1=\sqrt{\rho}$,\hspace{1cm} $L_2=(v,x)=N'/2+Nv_2=-\sqrt{\rho}$}
	
	\item[c.]{\textbf{Integrating Factor:}\\ 		$R_\lambda(v,x)=\frac{\exp(\int(-N'/N-2v_\lambda)dx)}{-v+v_\lambda}$ in this case:\\
		$$R_1(v,x)=\frac{\exp(\int(-1'/1-2v_1)dx)}{-v+v_1}=\frac{\exp(-2\sqrt{\rho}x)}{(-v+\sqrt{\rho})^2}$$
		
		$$R_2(v,x)=\frac{\exp(\int(-1'/1-2v_2)dx)}{-v+v_2}=\frac{\exp(2\sqrt{\rho}x)}{(-v-\sqrt{\rho})^2}$$
	}
	\item[d.]{\textbf{First Integral}
		For this case we must first calculate two elements:
		\begin{enumerate}
			\item[1.]{$v_{(\lambda,1)}=(\ln(y_{\lambda}))'$, In our case\\  $v_{(1,1)}=\sqrt{\rho}$ \hspace{1cm} and \hspace{1cm} $v_{(2,1)}=\sqrt{\rho}$.}
			\item[2.]{$v_{(\lambda,2)}=v_{(\lambda,1)}+\frac{\exp(-2\int v_{(\lambda,1)}dx)}{\int (\exp(-2\int v_{(\lambda,1)}dx))dx}$, In our case
				
				\begin{displaymath}
				\begin{array}{l}
				v_{(1,2)}=v_{(1,1)}+\dfrac{\exp^{-2\int v_{(1,1)}dx}}{\int (\exp(-2\int v_{(1,1)}dx))dx}\bigskip\\
				
				=\sqrt{\rho}+\dfrac{\exp(-2\int \sqrt{\rho}dx)}{\int (\exp(-2\int \sqrt{\rho}dx))dx}\bigskip\\
				=\sqrt{\rho}-\dfrac{\exp(-2\sqrt{\rho}x)}{-2\sqrt{\rho}\exp(-2\sqrt{\rho}x)+C}
				\end{array}
				\end{displaymath}
				
				\begin{displaymath}
				\begin{array}{l}
				v_{(2,2)}=v_{(2,1)}+\dfrac{\exp(-2\int v_{(2,1)}dx)}{\int (\exp(-2\int v_{(2,1)}dx))dx}\bigskip\\
				=-\sqrt{\rho}+\dfrac{\exp(2\int \sqrt{\rho}dx)}{\int (\exp(2\int \sqrt{\rho}dx))dx}\bigskip\\
				=-\sqrt{\rho}+\frac{\exp(2\sqrt{\rho}x)}{2\sqrt{\rho}\exp(2\sqrt{\rho}x)+C}
				\end{array}
				\end{displaymath}
				
			}
		\end{enumerate}
		
		Now the first integrals are given by:
		$$I_\lambda(v,x)=\dfrac{-v-v_{(\lambda,2)}}{-v-v_{(\lambda,1)}}\exp(\int(v_{(\lambda,2)}-v_{(\lambda,1)})dx)$$
		Then:
		\begin{displaymath}
		\begin{array}{l}
		I_1(v,x)=\bigskip\\
		\dfrac{-v-\sqrt{\rho}-\dfrac{\exp(-2\sqrt{\rho}x)}{-2\sqrt{\rho}\exp(-2\sqrt{\rho}x)+C}}{-v-\sqrt{\rho}}\exp(\int(\sqrt{\rho}-\dfrac{\exp(-2\sqrt{\rho}x)}{-2\sqrt{\rho}\exp(-2\sqrt{\rho}x)+C}-\sqrt{\rho})dx)\bigskip\\
		=\dfrac{-v-\sqrt{\rho}-\dfrac{\exp(-2\sqrt{\rho}x)}{-2\sqrt{\rho}\exp(-2\sqrt{\rho}x)+C}}{-v-\sqrt{\rho}}(-2\sqrt{\rho}\exp(-2\sqrt{\rho}x))\exp(1/4\rho)
		\end{array}
		\end{displaymath}
		
		\begin{displaymath}
		\begin{array}{l}
		I_2(v,x)=\bigskip\\
		\dfrac{-v+\sqrt{\rho}-\dfrac{\exp(2\sqrt{\rho}x)}{2\sqrt{\rho}\exp(2\sqrt{\rho}x)+C}}{-v+\sqrt{\rho}}\exp(\int(-\sqrt{\rho}+\dfrac{\exp(2\sqrt{\rho}x)}{2\sqrt{\rho}\exp(2\sqrt{\rho}x)+C}+\sqrt{\rho})dx)\bigskip\\
		=\dfrac{-v+\sqrt{\rho}-\dfrac{\exp(2\sqrt{\rho}x)}{2\sqrt{\rho}\exp(2\sqrt{\rho}x)+C}}{-v+\sqrt{\rho}}(2\sqrt{\rho}\exp(2\sqrt{\rho}x))\exp(1/4\rho)
		\end{array}
		\end{displaymath}
		
	}
\end{enumerate}

\end{proof}


\section{Final Remarks}
In this paper we studied from algebraic and qualitative point of view one parametric family of linear differential systems. Such parametric family comes from the correction of Exercise 11 in \cite[\S 1.3.3]{PY}. Which we called  Polyanin-Zaitsev vector field, see \cite{ARR}.\\

In this case we taken a particular member of a family of equations, which were studied on general case in \cite{ARR}. We had found critical points and the description near to these points. We obtained a new type of bifurcation that it seems new in the literature. In the algebraic approach, the explicit first integral has been found using the Darboux method. Moreover the differential Galois groups associated to solutions also were found.


\begin{thebibliography}{18}
%

\bibitem{ARR} P. B. Acosta-Hum\'anez, A. Reyes-Linero and J. Rodr\'iguez-Contreras, {\it Algebraic and qualitative remarks about the family $yy'= (\alpha x^{m+k-1} + \beta x^{m-k-1})y + \gamma x^{2m-2k-1}$}, preprint 2014.  Available at arXiv:1807.03551. 

\bibitem{PY} A.D. Polyanin and V.F. Zaitsev, {\it Handbook of exact solutions for ordinary differential equations, Second Edition}. Chapman and Hall, Boca Raton (2003).

\bibitem{Ap} P.B. Acosta-Hum\'anez, Ch. Pantazi, {\it Darboux Integrals for Schrodinger Planar Vector Fields via Darboux Transformations} SIGMA, 8 (2012) 043.

\bibitem{VDP} Van der Pol, B., and Van der Mark, J., {\it Frequency demultiplication}, Nature, 1927, 120, 363-364.

\bibitem{119} Nagumo, J., Arimoto, S. and Yoshizawa, S. {\it An active pulse transmission line simulating nerve axon}, Proc. IRE, 1962, 50, 2061-2070.

\bibitem{120} Guckenheimer, J., Hoffman, K., and Weckesser, W., {\it The forced Van der Pol equation I: The slow flow and its bifurcations}, SIAM J. Applied Dynamical Systems, 2003, 2, 1-35.

\bibitem{121} Kapitaniak, T., {\it Chaos for Engineers: Theory, Applications and Control}, Springer, Berlin, Germany, 1998.

\bibitem{Almp}P.B. Acosta-Hum\'anez, J.T L\'azaro, J.J. Morales-Ruiz, Ch. Pantazi, {\it On the integrability of polynomial fields in the plane by means of Picard-Vessiot theory}. Preprint arXiv:1012.4796.


\bibitem{PK} L. Perko, {\it Differential equations and Dynamical systems, Third Edition}. Springer-Verlag  New York, Inc (2001).

\bibitem{Jg} J. Guckenheimer, {\it Nonlinear Oscillations, Dynamical Systems and Bifurcations of Vector Fields}. Springer-Verlag  New York (1983).

\bibitem{Ns} V.V. Nemytskii and V.V. Stepanov, {\it Qualitative Theory of Differential Equations}, Princeton University Press, Princeton (1960).

\bibitem{Cll} C. Christopher \& J. Llibre, {\it Integrability via invariant algebraic curves for
	planar polynomial differential systems, Annals of Differential Equations} 14
(2000), 5-19.

\bibitem{CPP} Ch. Pantazi, {\it Inverse problems of the Darboux Theory of integrability for
	planar polynomial differential systems}, PhD, 2004.


\bibitem{A} P.B. Acosta-Hum\'anez, {\it Teor\'{\i}a de Morales-Ramis y el algoritmo de Kovacic} Lecturas Matem\'aticas
Volumen Especial (2006) 2156.

\bibitem{Ac3} P.B. Acosta-Hum\'anez, {\it Galoisian Approach to Supersymmetric Quantum Mechanics}. PHD. Thesis barcelona (2009).  Available at arXiv:0906.3532

\bibitem{Amw} P. Acosta-Hum\'anez, J. Morales-Ruiz and J.-A. Weil, {\it Galoisian Approach to integrability of Schrödinger Equation}. Reports on Mathematical Physics, 67 (2011) 305-374.

\bibitem{AJp} P.B. Acosta-Hum\'anez, J. Perez, {\it Teor\'{\i}a de Galois diferencial: una aproximaci\'on} Lecturas Matem\'aticas
Vol\'umen Especial (2006) 2156.


\bibitem{AJp2} P.B. Acosta-Hum\'anez, J. Perez, {\it Una introducci\'on teor\'ia de Galois diferencial} Bolet\'in de Matem\'aticas Nueva Serie, 11 (2004) 138149.

\bibitem{VDPS} M. van der Put and M. Singer, {\it Galois Theory in Linear Differential Equations} . Springer-Verlag  New York (2003).

\bibitem{JJM} J. Morales-Ruiz, {\it Differential Galois Theory and Non-Integrability of Hamiltonian Systems}. Birkhäuser, Basel (1999).

\bibitem{Lang} S. Lang, {\it Linear Algebra}, Undergraduate Text in Mathematics, Springer, Third edition (2010).


\bibitem{JAW} J.A. Weil, {\it Constant et polyn\'omes de Darboux en algèbre diff\'erentielle: applications aux systèmes diff\'erentiels lin\'eaires}. Doctoral thesis, (1995).

\bibitem{P2} Ch. Pantazi, {\it El metodo de Darboux, en notas del primer seminario de Integrabiliad}. Universidad Politecnica de Catalunya  (2005). Available at $https://upcommons.upc.edu/e-prints/bitstream/2117/2233/1/noinupc.pdf$.

\bibitem{GGG} H.Giacomini, J. Gine and M. Grau, {\it Integrability of planar polynomial differential systems through linear differential equations}. Birkhauser, Rocky Mountain J. Math. 36 (2006) 457485.










\end{thebibliography}
\end{document}